\newcommand{\ppt}{\operatorname{ppt}}
\newcommand{\zpth}{\operatorname{th}_{\gamma_P}}
\newcommand{\zpt}{\operatorname{pt}}
\newcommand{\zth}{\operatorname{th}}
\newcommand{\pd}{\mathcal{G}}
\renewenvironment{thebibliography}[1]{
  \begin{oldthebibliography}{#1}
    \setlength{\itemsep}{0em}
    \setlength{\parskip}{0em}
}
{
  \end{oldthebibliography}
}
\newtheorem{theorem}{Theorem}
\newtheorem{corollary}[theorem]{Corollary}
\newtheorem{lemma}[theorem]{Lemma}
\newtheorem{proposition}[theorem]{Proposition}
\theoremstyle{definition}
\newtheorem{definition}{Definition}
\definecolor{red}{rgb}{1,0,0}
\definecolor{blue}{rgb}{0,0,1}
\begin{document}

\title{Power domination throttling}

\author{Boris Brimkov\thanks{Department of Computational and Applied Mathematics, Rice University, Houston, TX, 77005, USA (boris.brimkov@rice.edu, ivhicks@rice.edu, rsp7@rice.edu, logan.smith@rice.edu)} \hskip 1.5em 
Joshua Carlson\thanks{Department of Mathematics, Iowa State University, Ames, IA, 50011, USA (jmsdg7@iastate.edu)} \hskip 1.5em
Illya V. Hicks$^*$\\ 
Rutvik Patel$^*$ \hskip 1.5em
Logan Smith$^*$}
\date{}

\maketitle

%
%

\begin{abstract}

A power dominating set of a graph $G=(V,E)$ is a set $S\subset V$ that colors every vertex of $G$ according to the following rules: in the first timestep, every vertex in $N[S]$ becomes colored; in each subsequent timestep, every vertex which is the only non-colored neighbor of some colored vertex becomes colored. The power domination throttling number of $G$ is the minimum sum of the size of a power dominating set $S$ and the number of timesteps it takes $S$ to color the graph. In this paper, we determine the complexity of power domination throttling and give some tools for computing and bounding the power domination throttling number. Some of our results apply to very general variants of throttling and to other aspects of power domination.

\smallskip

\noindent {\bf Keywords:} Power domination throttling, power domination, power propagation time, zero forcing
\end{abstract}

\section{Introduction}
A \emph{power dominating set} of a graph $G=(V,E)$ is a set $S\subset V$ that colors every vertex of $G$ according to the following rules: in the first timestep, every vertex in $N[S]$ becomes colored; in each subsequent timestep, every vertex which is the only non-colored neighbor of some colored vertex becomes colored. The first timestep is called the \emph{domination step} and each subsequent timestep is called a \emph{forcing step}. The {\em power domination number} of $G$, denoted $\gamma_P(G)$, is the cardinality of a minimum power dominating set. The \emph{power propagation time of $G$ using $S$}, denoted $\ppt(G;S)$, is the number of timesteps it takes for a power dominating set $S$ to color all of $G$. 
The \emph{power propagation time} of $G$ is defined as 
\[\ppt(G)=\min\{\ppt(G;S):S \text{ is a minimum power dominating set}\}.\] 
It is well-known that larger power dominating sets do not necessarily yield smaller power propagation times.
The \emph{power domination throttling number} of $G$ is defined as  
\[\zpth(G)=\min\{|S|+\ppt(G;S):S \text{ is a power dominating set}\}.\]
$S$ is a \emph{power throttling set} of $G$ if $S$ is a power dominating set of $G$ and $|S|+\ppt(G;S)=\zpth(G)$.

Power domination arises from a graph theoretic model of the Phase Measurement Unit (PMU) placement problem from electrical engineering. Electrical power companies place PMUs at select locations in a power network in order to monitor its performance; the physical laws by which PMUs observe the network give rise to the color change rules described above (cf. \cite{BH05,powerdom3}). This PMU placement problem has been explored extensively in the electrical engineering literature; see \cite{EEprobabilistic,Baldwin93,Brunei93,EEinformationTheoretic,EEtaxonomy,Mili91,EEtabuSearch,EEmultiStage}, and the bibliographies therein for various placement strategies and computational results. The PMU placement literature also considers various other properties of power dominating sets, such as redundancy, controlled islanding, and connectedness, and optimizes over them in addition to the cardinality of the set (see, e.g., \cite{akhlaghi,connected_pd,mahari,xia}).

Power domination has also been widely studied from a purely graph theoretic perspective. See, e.g., \cite{benson,bozeman,connected_pd,dorbec2,dorfling,kneis,xu,powerdom2} for various structural and computational results about power domination and related variants. The power propagation time of a graph has previously been studied in \cite{aazami,dorbec,ferrero,liao}.  
Other variants of propagation time arising from similar dynamic graph coloring processes have also been studied; these include zero forcing propagation time \cite{berliner,fast_hicks,proptime1,kenter} and positive semidefinite propagation time \cite{warnberg}. Throttling for other problems such as zero forcing \cite{butler_young}, positive semidefinite zero forcing \cite{carlson2}, minor monotone floor of zero forcing \cite{carlson}, and the game of Cops and Robbers \cite{breen} has been studied as well.

Notably missing from the literature on throttling (for power domination as well as other variants) is the computational complexity of the problems. In this paper, we determine the complexity of a large, abstract class of throttling problems, including power domination throttling. We also give explicit formulas and tight bounds for the power domination throttling numbers of certain graphs, and characterizations of graphs with extremal power domination throttling numbers.

\section{Preliminaries}

A graph $G=(V,E)$ consists of a vertex set $V=V(G)$ and an edge set $E=E(G)$ of two-element subsets of $V$. The \emph{order} of $G$ is denoted by $n(G)=|V|$. We will assume that the order of $G$ is nonzero, and when there is no scope for confusion, dependence on $G$ will be omitted. Two vertices $v,w\in V$ are \emph{adjacent}, or \emph{neighbors}, if $\{v,w\}\in E$; we will sometimes write $vw$ to denote an edge $\{v,w\}$. The \emph{neighborhood} of $v\in V$ is the set of all vertices which are adjacent to $v$, denoted $N(v)$; the \emph{degree} of $v\in V$ is defined as $d(v)=|N(v)|$. The \emph{maximum degree} of $G$ is defined as $\Delta(G)=\max_{v\in V}d(v)$; when there is no scope for confusion, dependence on $G$ will be omitted. The \emph{closed neighborhood} of $v$ is the set $N[v]=N(v)\cup \{v\}$. 

\emph{Contracting} an edge $e$ of a graph $G$, denoted $G/e$, is the operation of removing $e$ from $G$ and identifying the endpoints of $e$ into a single vertex. A graph $H$ is a \emph{subgraph} of a graph $G$, denoted $H\leq G$, if $H$ can be obtained from $G$ by deleting vertices and deleting edges of $G$; $H$ is a \emph{minor} of $G$, denoted $H\preceq G$, if $H$ can be obtained from $G$ by deleting vertices, deleting edges, and contracting edges of $G$.
Given $S \subset V$, $N[S]=\bigcup_{v\in S}N[v]$, and the \emph{induced subgraph} $G[S]$ is the subgraph of $G$ whose vertex set is $S$ and whose edge set consists of all edges of $G$ which have both endpoints in $S$. An isomorphism between graphs $G_1$ and $G_2$ will be denoted by $G_1\simeq G_2$. 
Given a graph $G=(V,E)$, and sets $V'\subset V$ and $E'\subset E$, we say the vertices in $V'$ are \emph{saturated} by the edges in $E'$ if every vertex of $V'$ is incident to some edge in $E'$. 
An \emph{isolated vertex}, or \emph{isolate}, is a vertex of degree 0. A \emph{dominating vertex} is a vertex which is adjacent to all other vertices. The path, cycle, complete graph, and empty graph on $n$ vertices will respectively be denoted $P_n$, $C_n$, $K_n$, $\overline{K}_n$.

Given two graphs $G_1$ and $G_2$, the \emph{disjoint union} $G_1\dot\cup G_2$ is the graph with vertex set $V(G_1)\dot\cup V(G_2)$ and edge set $E(G_1)\dot\cup E(G_2)$. With a slight abuse in notation, given a set $S\subset V(G_1\dot\cup G_2)$, we will use, e.g., $S\cap V(G_1)$ to denote the set of vertices in $G_1\dot\cup G_2$ originating from $G_1$ (instead of specifying the unique index created by the disjoint union operation). The \emph{intersection} of $G_1$ and $G_2$, denoted $G_1\cap G_2$, is the graph with vertex set $V(G_1)\cap V(G_2)$ and edge set $E(G_1)\cap E(G_2)$.
The \emph{Cartesian product} of $G_1$ and $G_2$, denoted $G_1\square G_2$, is the graph with vertex set $V(G_1)\times V(G_2)$, where vertices $(u,u')$ and $(v,v')$ are adjacent in $G_1\square G_2$ if and only if either $u = v$ and $u'$ is adjacent to $v'$ in $G_2$, or $u' = v'$ and $u$ is adjacent to $v$ in $G_1$. 
The \emph{join} of $G_1$ and $G_2$, denoted $G_1\lor G_2$, is the graph obtained from $G_1\dot\cup G_2$ by adding an edge between each vertex of $G_1$ and each vertex of $G_2$. 
The \emph{complete bipartite graph} with parts of size $a$ and $b$, denoted $K_{a,b}$, is the graph $\overline{K}_a\lor\overline{K}_b$. The graph $K_{n-1,1}$, $n\geq 3$, will be called a \emph{star}. 
For other graph theoretic terminology and definitions, we refer the reader to \cite{bondy}.

A \emph{zero forcing} set of a graph $G=(V,E)$ is a set $S\subset V$ that colors every vertex of $G$ according to the following color change rule: initially, every vertex in $S$ is colored; then, in each timestep, every vertex which is the only non-colored neighbor of some colored vertex becomes colored. Note that in a given forcing step, it may happen that a vertex $v$ is the only non-colored neighbor of several colored vertices. In this case, we may arbitrarily choose one of those colored vertices $u$, and say that $u$ is the one which forces $v$; making such choices in every forcing step will be called ``fixing a chronological list of forces". 
The notions of \emph{zero forcing number} of $G$, denoted $Z(G)$, \emph{zero forcing propagation time of $G$ using $S$}, denoted $\zpt(G;S)$, \emph{zero forcing propagation time} of $G$, denoted $\zpt(G)$, and \emph{zero forcing throttling number}, denoted $\zth(G)$, are defined analogously to $\gamma_P(G)$, $\ppt(G;S)$, $\ppt(G)$, and $\zpth(G)$. A \emph{positive semidefinite (PSD) zero forcing set} of $G$ is a set $S\subset V$ which colors every vertex of $G$ according to the following color change rule: initially, in timestep 0, every vertex in $S_0:=S$ is colored; then, in each timestep $i\geq 1$, if $S_{i-1}$ is the set of colored vertices in timestep $i-1$, and $W_1,\ldots,W_k$ are the vertex sets of the components of $G-S_{i-1}$, then every vertex which is the only non-colored neighbor of some colored vertex in $G[W_j\cup S_{i-1}]$, $1\leq j\leq k$, becomes colored. As with zero forcing, the PSD zero forcing notation $Z_+(G)$, $\zpt_+(G;S)$, $\zpt_+(G)$, and $\zth_+(G)$ is analogous to $\gamma_P(G)$, $\ppt(G;S)$, $\ppt(G)$, and $\zpth(G)$, respectively. For every graph $G$, $\gamma_P(G)\le\zpth(G)\le\zth(G)$. Moreover, in general, $\zpth(G)$ and $\zth_+(G)$ are not comparable; for example, $\zpth(K_7)<th_+(K_7)$, while $\zpth(G)>th_+(G)$ for $G=(\{1,2,3,4,5,6,7\},\{\{1,2\},\{1,3\},\{1,4\},\{2,5\},\{2,6\},\{3,7\}\})$.

\section{Complexity Results}

A number of NP-Completeness results have been presented for power domination, zero forcing, and positive semidefinite zero forcing. For example power domination was shown to be NP-Complete for general graphs \cite{powerdom3}, planar graphs \cite{guo}, chordal graphs \cite{powerdom3}, bipartite graphs \cite{powerdom3}, split graphs \cite{guo,liao2}, and circle graphs \cite{guo}; zero forcing was shown to be NP-Complete for general graphs \cite{aazami2,fallat} and planar graphs \cite{aazami2}; PSD zero forcing was shown to be NP-complete for general graphs \cite{yang} and line graphs \cite{wang}. However, despite recent interest in the corresponding throttling problems, to our knowledge there are no complexity results for any of those problems. In this section, we provide sufficient conditions which ensure that given an NP-Complete vertex minimization problem, the corresponding throttling problem is also NP-Complete.

To facilitate the upcoming discussion, we recall three categories of graph parameters introduced by Lov\'{a}sz \cite{lovasz}. Let $\phi$ be a graph parameter and $G_1$ and $G_2$ be two graphs on which $\phi$ is defined. Then, $\phi$ is called {\em maxing} if $\phi(G_1 \dot\cup G_2) = \max\{\phi(G_1), \phi(G_2)\}$, {\em additive} if $\phi(G_1 \dot\cup G_2) = \phi(G_1) + \phi(G_2)$, and {\em multiplicative} if $\phi(G_1 \dot\cup G_2) = \phi(G_1)\phi(G_2)$. For example, $\gamma_P(G)$ is an additive parameter, $\ppt(G)$ is a maxing parameter, and the number of distinct power dominating sets admitted by $G$ is a multiplicative parameter. We will show that with only minor additional assumptions, a minimization problem defined as the sum of a maxing parameter and an additive parameter inherits the NP-Completeness of the additive parameter for any family of graphs. 

\begin{definition}
\label{def1}
Given a graph $G=(V,E)$, let $X(G)$ be a set of subsets of $V$ and let $p(G;\,\cdot\,)$ be a function which maps a member of $X(G)$ to a nonnegative integer. Define the parameters $x(G):=\min_{S \in X(G)} |S|$ and $p(G):=\min_{\substack{S \in X(G)\\ |S| = x(G)}} p(G;S)$, and define $\arg p(G):=\arg\min_{\substack{S \in X(G)\\ |S| = x(G)}} p(G;S)$.
\end{definition}

\noindent Note that the function $p$ and the parameter $p$ are differentiated by their inputs. Table \ref{table_exp} shows the power domination notation corresponding to the abstract notation of Definition~\ref{def1}.

\begin{table}[H]
\renewcommand{\arraystretch}{1.5}
\centering
\begin{tabularx}{\textwidth}{|X|X|}
\hline
	
\textbf{Abstract notation} &\textbf{Power domination notation}\\
\hline
$X(G)$ & Set of power dominating sets of $G$\\
$x(G)$ & $\gamma_P(G)$\\
$p(G;S)$ &$\ppt(G;S)$\\
$p(G)$ &$\ppt(G)$\\
$\min_{S\in X(G)} \{|S|+p(G;S)\}$ &$\zpth(G)$\\
\hline
\end{tabularx}
\caption{Notation for abstract problems and corresponding notation for power domination.}
\label{table_exp}

\end{table}

\noindent Table \ref{fig:probs} gives a pair of abstract decision problems that can be defined for $X$, $x$, and $p$, as well as three instances which have been studied in the literature.

\begin{table}[H]
\newcolumntype{S}{>{\hsize=.9\hsize}X}
\newcolumntype{D}{>{\hsize=1.1\hsize}X}
	\centering
	\renewcommand{\arraystretch}{1.5}
	\begin{tabularx}{\textwidth}{|S|D|}
	\hline
	\textbf{Set minimization problem} & \textbf{Throttling problem}
	\\ \hline

	\textsc{Minimum $X$ set} \newline
	\textbf{Instance:} Graph $G$, integer $k$ \newline
	\textbf{Question:} Is $x(G)<k$? &
    
	\textsc{$(X,p)$-Throttling} \newline
	\textbf{Instance:} Graph $G$, integer $k$ \newline
	\textbf{Question:} Is $\min_{S\in X(G)}\{|S|+p(G;S)\}<k$? \\ 
	\hline
	\hline
	\textsc{Power Domination} \newline
	\textbf{Instance:} Graph $G$, integer $k$ \newline
	\textbf{Question:} Is $\gamma_P(G)<k$? &
    
	\textsc{Power Domination Throttling} \newline
	\textbf{Instance:} Graph $G$, integer $k$ \newline
	\textbf{Question:} Is $\zpth(G)<k$? \\ \hline
    
	\textsc{Zero Forcing} \newline
	\textbf{Instance:} Graph $G$, integer $k$ \newline
	\textbf{Question:} Is $Z(G)<k$? &

	\textsc{Zero Forcing Throttling} \newline
	\textbf{Instance:} Graph $G$, integer $k$ \newline
	\textbf{Question:} Is $\zth(G)<k$? \\ \hline
    
	\textsc{PSD Zero Forcing} \newline
	\textbf{Instance:} Graph $G$, integer $k$ \newline
	\textbf{Question:} Is $Z_+(G)<k$? &

	\textsc{PSD Zero Forcing Throttling} \newline
	\textbf{Instance:} Graph $G$, integer $k$ \newline
	\textbf{Question:} Is $\zth_{+}(G)<k$? \\ \hline
       
	\end{tabularx}
    \caption{NP-Complete set minimization problems and corresponding throttling problems.}
    \label{fig:probs}
    
\end{table}

\noindent We now give sufficient conditions to relate the complexity of these problems.


\begin{theorem} \label{thm:comp}
Let $X$ and $p$ (as in Definition \ref{def1}) satisfy the following:
\begin{enumerate}
\item[1)] For any graph $G$, there exist constants $b$, $c$ such that for any set $S \in X(G)$, $p(G;S) < b = O(|V(G)|^{c})$, and $p(G;S)$ and $b$ can be computed in $O(|V(G)|^{c})$ time.
\item[2)] For any graphs $G_1$ and $G_2$, $X(G_1 \dot\cup G_2) = \{S_1 \dot\cup S_2 : S_1 \in X(G_1), S_2 \in X(G_2)\}$.
\item[3)] For any graphs $G_1$ and $G_2$, and for any $S_1 \in X(G_1)$ and $S_2 \in X(G_2)$, $p(G_1 \dot\cup G_2; S_1 \dot\cup S_2) = \max\{ p(G_1;S_1), p(G_2;S_2) \}$.
\item[4)] \textsc{Minimum $X$ Set} is NP-Complete.
\end{enumerate}
Then, \textsc{$(X,p)$-Throttling} is NP-Complete.
\end{theorem}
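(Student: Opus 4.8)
The plan is to establish the two halves of NP-Completeness separately: membership in NP, then NP-hardness via a polynomial reduction from \textsc{Minimum $X$ Set}. For membership in NP, I would use a set $S \subseteq V(G)$ as the certificate for a yes-instance $(G,k)$: checking $S \in X(G)$ is polynomial (this check is already implicit in \textsc{Minimum $X$ Set} lying in NP, per condition 4), and by condition 1 one can compute $p(G;S)$, and hence test $|S| + p(G;S) < k$, in $O(|V(G)|^c)$ time. Thus \textsc{$(X,p)$-Throttling} $\in$ NP.

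For hardness, write $mG$ for the disjoint union of $m$ copies of $G$, and abbreviate the throttling objective by $T(H) := \min_{S \in X(H)}\{|S| + p(H;S)\}$. Given an instance $(G,k)$ of \textsc{Minimum $X$ Set}, I would first compute the bound $b$ from condition 1 (polynomial time), set $m := b$, and output the instance $(mG,\, mk)$ of \textsc{$(X,p)$-Throttling}. Since $b = O(|V(G)|^{c})$, the graph $mG$ has $O(|V(G)|^{c+1})$ vertices and is constructible in polynomial time, so the reduction is polynomial; the remaining task is to show $x(G) < k \iff T(mG) < mk$.

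The structural input is conditions 2 and 3, iterated over the $m$ copies: every $S \in X(mG)$ has the form $S_1 \dot\cup \cdots \dot\cup S_m$ with each $S_i \in X(G)$, with $|S| = \sum_i |S_i|$ and $p(mG;S) = \max_i p(G;S_i)$. Two bounds follow. For the lower bound, each $|S_i| \ge x(G)$ and $\max_i p(G;S_i) \ge 0$, so $T(mG) \ge m\, x(G)$. For the upper bound, taking every $S_i$ equal to a fixed minimum set $S^* \in X(G)$ with $p(G;S^*) = p(G)$ gives $T(mG) \le m\, x(G) + p(G)$. Hence $T(mG) = m\, x(G) + r$ for some integer $r$ with $0 \le r \le p(G)$.

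The key quantitative point --- and the step I expect to require the most care --- is that this ``propagation remainder'' $r$ is bounded independently of $m$: since $p(G) < b = m$ by condition 1, we get $0 \le r < m$. Consequently $T(mG) < mk$ holds exactly when $m\, x(G) < mk$, i.e. exactly when $x(G) < k$, which verifies correctness of the reduction. The main obstacle is engineering this separation of scales: condition 3 forces the propagation contribution to collapse into a single bounded maximum rather than a growing sum, while condition 2 makes the set-size contribution grow linearly in $m$. Choosing $m$ larger than the polynomial bound $b$ then lets $T(mG)$ encode $x(G)$ in its leading part, so that the threshold $mk$ recovers the comparison $x(G) < k$.
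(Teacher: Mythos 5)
Your proof is correct. It follows the same reduction skeleton as the paper's---NP membership argued the same way, then amplification of \textsc{Minimum $X$ Set} by taking polynomially many disjoint copies of $G$ so that the additive set-size term dominates the bounded propagation term---but the decisive counting step is genuinely different, and cleaner. Writing $T(H)$ for $\min_{S\in X(H)}\{|S|+p(H;S)\}$ and $mG$ for the disjoint union of $m$ copies of $G$, as you do: the paper uses $B=b+1$ copies with threshold $Bk+b$, and its argument requires the exact evaluation $T(G')=Bx(G)+p(G)$, established via a case split on $|S'|$ plus a pigeonhole argument locating a copy $G_j$ with $|S'\cap V(G_j)|=x(G)$. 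You use $m=b$ copies with threshold $mk$, and need only the two-sided bound $m\,x(G)\le T(mG)\le m\,x(G)+p(G)<m\,x(G)+m$, which falls out of conditions 2), 3), and nonnegativity of $p$; no pigeonhole and no exact formula are needed. Moreover, your threshold is not merely simpler---it sidesteps a flaw in the paper's final step. From $Bx(G)+p(G)<Bk+b$ the paper deduces $x(G)<k$ by ``taking floors,'' but for an integer $u$, $u<v$ only yields $u\le\lfloor v\rfloor$, not $u<\lfloor v\rfloor$; indeed, when $x(G)=k$ the paper's constructed instance satisfies $Bx(G)+p(G)=Bk+p(G)<Bk+b$, so a no-instance of \textsc{Minimum $X$ Set} would map to a yes-instance of \textsc{$(X,p)$-Throttling}. (The paper's reduction is repaired by lowering its threshold to $Bk$, which is in essence what your choice $mk$ does.) With your threshold, $x(G)\ge k$ forces $T(mG)\ge m\,x(G)\ge mk$, so the equivalence holds. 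Two small points, shared with the paper, that you could make explicit: conditions 2) and 3) are stated for two graphs and need a routine induction to apply to $m$ copies, and inferring a polynomial-time test for membership in $X(G)$ from ``\textsc{Minimum $X$ Set} is in NP'' is a slightly informal step in both write-ups.
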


\begin{proof}
We will first show that $x$ is an additive parameter and $p$ is a maxing parameter. Let $G_1$ and $G_2$ be graphs. By 2), 
\begin{eqnarray*}
x(G_1 \dot\cup G_2)&=& \min \{|S'|:S' \in X(G_1 \dot\cup G_2)\}\\
&=&\min \{|S'|:S' \in \{S_1 \dot\cup S_2 : S_1 \in X(G_1), S_2 \in X(G_2)\}\}\\
&=&\min \{|S_1|+|S_2|:S_1 \in X(G_1), S_2\in X(G_2)\}\\
&=&\min\{|S_1|:S_1\in X(G_1)\}+\min\{|S_2|:S_2\in X(G_2)\}= x(G_1)+x(G_2).
\end{eqnarray*}
Thus, $x$ is additive by definition. Now let $S^*$ be a set in $\arg p(G_1 \dot\cup G_2)$. By 2), there exist sets $S_1\in X(G_1)$ and $S_2\in X(G_2)$ such that $S^*=S_1\dot\cup S_2$. By definition, $|S_1|\geq x(G_1)$ and $|S_2|\geq x(G_2)$, and since $x$ is additive, $|S^*|=x(G_1\dot\cup G_2)=x(G_1)+x(G_2)$. Thus, $|S_1|=x(G_1)$ and $|S_2|=x(G_2)$. Then,
\begin{eqnarray*}
p(G_1\dot\cup G_2)&=&p(G_1 \dot\cup G_2;S^*) = p(G_1 \dot\cup G_2; S_1 \dot\cup S_2) = \max \{p(G_1;S_1), p(G_2;S_2)\} \\
&\geq& \max \left\{\min_{\substack{S\in X(G_1) \\ |S| = x(G_1)}}p(G_1;S), \min_{\substack{S\in X(G_2) \\ |S| = x(G_2)}}p(G_2;S)\right\} =\max \{p(G_1), p(G_2) \},
\end{eqnarray*}
where the third equality follows from 3), and the inequality follows from the fact that $|S_1|=x(G_1)$ and $|S_2|=x(G_2)$. Now, let $S_1^*\in \arg p(G_1)$ and $S_2^*\in\arg p(G_2)$. Then,
\begin{eqnarray*}
p(G_1 \dot\cup G_2) &=& \min_{\substack{S' \in X(G_1 \dot\cup G_2) \\ |S'| = x(G_1 \dot\cup G_2)}} p(G_1 \dot\cup G_2;S')\leq p(G_1\dot\cup G_2;S_1^* \dot\cup S_2^*)\\ 
&=&\max \{p(G_1;S_1^*), p(G_2;S_2^*)\} = \max \{p(G_1), p(G_2) \},
\end{eqnarray*} 
where the inequality follows from 2) and the fact that $x$ is additive, and the second equality follows from 3).
Thus, $p(G_1 \dot\cup G_2) = \max \{p(G_1), p(G_2)\}$, so $p$ is maxing by definition.\newline

Next we will show that \textsc{$(X,p)$-Throttling} is in NP. By 1), for any $S \in X(G)$, $p(G; S)$ can be computed in polynomial time. By 4), \textsc{Minimum $X$ Set} is in NP, so there exists a polynomial time algorithm to verify that $S$ is in $X(G)$. Thus, for any $S \subset V(G)$, $|S|+p(G;S)$ can be computed or found to be undefined in polynomial time. Therefore, \textsc{$(X,p)$-Throttling} is in NP.

We will now show that \textsc{$(X,p)$-Throttling} is NP-Hard, by providing a polynomial reduction from \textsc{Minimum $X$ Set}. Let $\langle G,k\rangle$ be an instance of \textsc{Minimum $X$ Set}. Let $B = b + 1$, where $b$ is the bound on $p(G;S)$ in 1). Let $G_1,\ldots,G_B$ be disjoint copies of $G$, and let $G' = \dot\cup_{i=1}^B G_i$. We will show $\langle G,k\rangle$ is a `yes'-instance of \textsc{Minimum $X$ Set} if and only if $\langle G',Bk + b \rangle$ is a `yes'-instance of \textsc{$(X,p)$-Throttling}. Note that by 1), $\langle G',Bk + b \rangle$ can be constructed in a number of steps that is polynomial in $n$.
Since $x$ is an additive parameter, $x(G')=x(\dot\cup_{i=1}^B G_i)=\sum_{i=1}^B x(G_i)=Bx(G)$. Thus,
\begin{eqnarray*}
\min_{S' \in X(G')} \{|S'| + p(G';S')\} &\leq& \min_{\substack{S' \in X(G') \\ |S'| = x(G')}} \{|S'| + p(G';S')\}\\
&=& \min_{\substack{S' \in X(G') \\ |S'| = x(G')}} \{Bx(G) + p(G';S')\}\\
&=& Bx(G) + p(G') = Bx(G) + p(G),
\end{eqnarray*}
where the last equality follows from the fact that $p$ is maxing, and $p(G') = p(\dot\cup_{i=1}^B G_i) = \max\{p(G_1),\ldots,p(G_B)\} = p(G)$. 

Now consider any $S' \in X(G')$. Clearly $|S'| \geq x(G') = Bx(G)$. Suppose first that $|S'| \geq B(x(G)+1)$; then, 
\[ |S'| + p(G';S') \geq B(x(G)+1) + p(G';S') \geq Bx(G) + B > Bx(G) + p(G).\]

\noindent Now suppose that $|S'| < B(x(G)+1)$. Since $S' \in X(G') = \{\dot\cup_{i=1}^B S_i : S_i \in X(G_i) \}$, $|S' \cap V(G_i)| \geq x(G)$ for all $i\in\{1,\ldots,B\}$. By the pigeonhole principle, $|S' \cap V(G_j)|=|S_j| = x(G)$ for some $j\in\{1,\ldots,B\}$. By 3),
$$ p(G';S') = \max \{p(G_j; S_j), p(G'-G_j; S' \backslash S_j) \} \geq p(G_j; S_j) \geq p(G). $$
Thus in all cases, $|S'| + p(G';S') \geq Bx(G) + p(G)$. Hence, it follows that
\begin{equation}
\label{eq_comp}
\min_{S' \in X(G')}\{|S'| + p(G';S')\} = Bx(G) + p(G).
\end{equation} 
We will now show that $x(G) < k$ if and only if $\min_{S' \in X(G')}\{|S'| + p(G';S')\} < Bk + b$. First, suppose that $x(G) < k$. Then by \eqref{eq_comp}, $\min_{S' \in X(G')}\{|S'| + p(G';S')\} = Bx(G) + p(G) < Bk + b$. Now suppose that $\min_{S' \in X(G')}\{|S'| + p(G';S')\} < Bk + b$. Then, by \eqref{eq_comp}, $Bx(G) + p(G) < Bk + b$. Rearranging, dividing by $B$, and taking the floor yields 
\[x(G)=\lfloor x(G)\rfloor< \left\lfloor k+\frac{b - p(G)}{B} \right\rfloor=k+\left\lfloor\frac{B-1 - p(G)}{B} \right\rfloor = k.\]
Thus, $\langle G,k\rangle$ is a `yes'-instance of \textsc{Minimum $X$ Set} if and only if $\langle G',Bk + b \rangle$ is a `yes'-instance of \textsc{$(X,p)$-Throttling}.
\end{proof}

\noindent We now show that Theorem \ref{thm:comp} can be applied to the specific throttling problems posed for power domination, zero forcing, and positive semidefinite zero forcing.

\begin{corollary}
\label{cor_comp}
\textsc{Power Domination Throttling}, \textsc{Zero Forcing Throttling}, and \textsc{PSD Zero Forcing Throttling} are NP-Complete. 
\end{corollary}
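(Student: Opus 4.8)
The plan is to apply Theorem~\ref{thm:comp} three times, once for each parameter, taking $X(G)$ to be the set of power dominating sets (respectively, zero forcing sets, PSD zero forcing sets) of $G$ and $p(G;S)$ to be $\ppt(G;S)$ (respectively, $\zpt(G;S)$, $\zpt_+(G;S)$). Since Theorem~\ref{thm:comp} concludes that \textsc{$(X,p)$-Throttling} is NP-Complete whenever its four hypotheses hold, it suffices to verify hypotheses 1)--4) in each of the three cases; instantiating the abstract notation via Table~\ref{table_exp} then yields the three named throttling problems.

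First I would dispatch hypothesis 1) uniformly across all three processes. In each coloring process, every timestep prior to completion colors at least one previously uncolored vertex, so any $S\in X(G)$ that colors $G$ does so in at most $n(G)$ timesteps; hence $p(G;S)<b:=n(G)+1=O(|V(G)|)$, giving the universal constant $c=1$, and both $b$ and $p(G;S)$ are computable in polynomial time by directly simulating the relevant color change rule.

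Next I would verify hypotheses 2) and 3) together, where the bulk of the routine but essential work lies. The governing observation is that all three color change rules are \emph{local} to connected components: a colored vertex can only dominate or force a vertex to which it is joined by a path in the current graph, and no such path crosses between $G_1$ and $G_2$ in a disjoint union. Consequently, a set $S$ colors $G_1\dot\cup G_2$ if and only if $S\cap V(G_1)$ colors $G_1$ and $S\cap V(G_2)$ colors $G_2$, which is exactly hypothesis 2); and since the two halves are colored independently and in parallel, the number of timesteps needed to color $G_1\dot\cup G_2$ equals the maximum of the two separate completion times, which is hypothesis 3). Finally, hypothesis 4) is precisely the statement that \textsc{Minimum $X$ Set} is NP-Complete, which holds for all three parameters by the cited results: \cite{powerdom3} for power domination, \cite{aazami2,fallat} for zero forcing, and \cite{yang} for PSD zero forcing.

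I expect the main obstacle to be the verification of locality for PSD zero forcing, whose rule references the components $W_1,\dots,W_k$ of $G-S_{i-1}$ rather than only neighborhoods. Here I would note that each component of $(G_1\dot\cup G_2)-S_{i-1}$ lies entirely within $V(G_1)$ or within $V(G_2)$, so every induced subgraph $G[W_j\cup S_{i-1}]$ appearing in the rule is contained in one of $G_1$ or $G_2$; the PSD process therefore still evolves independently on the two parts, and the arguments for hypotheses 2) and 3) carry over without change.
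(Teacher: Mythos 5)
Your proposal is correct and follows essentially the same route as the paper: both verify hypotheses 1)--4) of Theorem~\ref{thm:comp} for each of the three processes (bounding and computing $p(G;S)$ in polynomial time, using locality of the color change rules under disjoint union for hypotheses 2) and 3), and citing the known NP-Completeness results for hypothesis 4)). Your explicit check that components of $(G_1\dot\cup G_2)-S_{i-1}$ lie entirely in one part simply fills in the PSD case that the paper dispatches with ``by a similar reasoning.''
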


\begin{proof}
Given a graph $G$, let $X(G)$ denote the set of power dominating sets of $G$ and for $S\in X(G)$, let $p(G;S)$ denote the power propagation time of $G$ using $S$. Clearly, for any power dominating set $S$, $\ppt(G;S)$ is bounded above by $|V(G)|$, and can be computed in polynomial time. Thus, assumption 1) of Theorem \ref{thm:comp} is satisfied. For any graphs $G_1$ and $G_2$, it is easy to see that $S$ is a power dominating set of $G_1 \dot\cup G_2$ if and only if $S \cap V(G_1)$ is a power dominating set of $G_1$ and $S \cap V(G_2)$ is a power dominating set of $G_2$. Thus, assumption 2) of Theorem \ref{thm:comp} is satisfied.
Let $G_1$ and $G_2$ be graphs, and let $S_1$ be a power dominating set of $G_1$ and $S_2$ be a power dominating set of $G_2$. Then, the same vertices which are dominated in $G_1$ by $S_1$ and in $G_2$ by $S_2$ can be dominated in $G_1\dot\cup G_2$ by $S_1\dot\cup S_2$, and all forces that occur in timestep $i\geq 2$ in $G_1$ and $G_2$ will occur in $G_1\dot\cup G_2$ at the same timestep. Thus, $\ppt(G_1 \dot\cup G_2; S_1 \dot\cup S_2) = \max\{ \ppt(G_1;S_1), \ppt(G_2;S_2) \}$, so assumption 3) of Theorem \ref{thm:comp} is satisfied. Finally, since \textsc{Power Domination} is NP-Complete (cf. \cite{powerdom3}), assumption 4) of Theorem \ref{thm:comp} is satisfied. Thus, \textsc{Power Domination Throttling} is NP-Complete. By a similar reasoning, it can be shown that the assumptions of Theorem \ref{thm:comp} also hold for zero forcing and positive semidefinite zero forcing; thus, \textsc{Zero Forcing Throttling} and \textsc{PSD Zero Forcing Throttling} are also NP-Complete. 
\end{proof}

Some graph properties are preserved under disjoint unions; we will call a graph property $P$ {\em additive} if for any two graphs $G_1$, $G_2$ with property $P$, $G_1 \dot\cup G_2$ also has property $P$. Let $\left<G, k\right>$ be an instance of \textsc{Minimum $X$ Set} in the special case that $G$ has property $P$. In the proof of Theorem \ref{thm:comp}, a polynomial reduction from $\left<G, k\right>$ to an instance $\left<G', Bk + b\right>$ of \textsc{$(X, p)$-Throttling} is given, where $G'$ is the disjoint union of copies of $G$. If property $P$ is additive, then $G'$ also has property $P$. Thus, special cases of \textsc{$(X, p)$-Throttling} in graphs with property $P$ reduce from instances of \textsc{Minimum $X$ Set} with property $P$, by the proof of Theorem \ref{thm:comp}. It is easy to see that planarity, chordality, and bipartiteness are additive properties. As noted at the beginning of this section, \textsc{Power Domination} is NP-Complete for graphs with these properties. Thus, these NP-Completeness results can be extended to the corresponding throttling problem.

\begin{corollary}
\textsc{Power Domination Throttling} is NP-Complete even for planar graphs, chordal graphs, and bipartite graphs. 
\end{corollary}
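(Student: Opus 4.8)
The plan is to invoke the reduction already built in the proof of Theorem \ref{thm:comp}, and to observe that this reduction never leaves the relevant graph class. Recall from Corollary \ref{cor_comp} that power domination satisfies hypotheses 1)--3) of Theorem \ref{thm:comp}; hence, for any family of graphs on which \textsc{Power Domination} is NP-Complete, hypothesis 4) is also met, and the theorem delivers NP-Completeness of \textsc{Power Domination Throttling}. The only additional thing to verify is that the reduction, which sends an instance $\langle G,k\rangle$ to $\langle G',Bk+b\rangle$ with $G'=\dot\cup_{i=1}^B G_i$ a disjoint union of copies of $G$, maps an input from a given class back into the same class.

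First I would formalize the notion of an \emph{additive} graph property (as in the paragraph preceding the corollary) and check that planarity, chordality, and bipartiteness are each additive: a disjoint union of planar graphs is planar (place the plane drawings side by side), a disjoint union of chordal graphs is chordal (every cycle lies within a single component, where it already has a chord), and a disjoint union of bipartite graphs is bipartite (take the union of the respective bipartitions). Next, I would recall from the introduction that \textsc{Power Domination} is NP-Complete for planar graphs \cite{guo}, chordal graphs \cite{powerdom3}, and bipartite graphs \cite{powerdom3}. Then, for each such property $P$, the restriction of \textsc{Power Domination} to graphs with property $P$ is NP-Complete, so hypothesis 4) holds for this restricted problem; since $P$ is additive, the reduced graph $G'$ also has property $P$, so the reduction of Theorem \ref{thm:comp} stays within the class and certifies NP-Hardness of \textsc{Power Domination Throttling} restricted to graphs with property $P$. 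Membership in NP is inherited from Corollary \ref{cor_comp}, since restricting the set of allowed inputs cannot move a problem out of NP.

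There is essentially no hard step here: the entire argument rests on the fact that the reduction of Theorem \ref{thm:comp} uses only the disjoint-union operation, together with the closure of all three properties under that operation. The one point requiring a moment of care is that Theorem \ref{thm:comp} is phrased for unrestricted instances, so I would note explicitly that its proof carries through verbatim when both \textsc{Minimum $X$ Set} and \textsc{$(X,p)$-Throttling} are restricted to an additive class $P$ — precisely because the reduced instance $G'$ remains in $P$. With this observation in hand, the corollary follows immediately for all three properties.
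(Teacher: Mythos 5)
Your proposal is correct and follows essentially the same route as the paper: both arguments observe that the reduction in Theorem \ref{thm:comp} only takes disjoint unions of copies of the input graph, that planarity, chordality, and bipartiteness are additive (closed under disjoint union), and that \textsc{Power Domination} is NP-Complete on each of these classes, so the reduction stays within the class. Your additional remarks (explicit checks of additivity and the note that NP membership survives restriction of the input class) are fine elaborations of the same argument.
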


\section{Bounds and exact results for $\zpth(G)$}

In this section, we derive several tight bounds and exact results for the power domination throttling number of a graph. We have
also implemented a brute force algorithm for computing the power domination throttling number of arbitrary graphs (cf. \url{https://github.com/rsp7/Power-Domination-Throttling}), and used it to
compute the power domination throttling numbers of all graphs on fewer than 10 vertices. Recall the following well-known bound on the power propagation time.
\begin{lemma}[\cite{ferrero,proptime1}]\label{pth_bound_lemma}
Let $G$ be a graph and $S$ be a power dominating set of $G$. Then 
$\ppt(G;S)\geq \frac{1}{\Delta}\left(\frac{n}{|S|}-1\right)$.
\end{lemma}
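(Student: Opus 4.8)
The plan is to recast the claimed inequality in the equivalent form $n \le |S|\,(1 + \Delta\cdot\ppt(G;S))$ and to prove it by bounding, timestep by timestep, the number of vertices that become newly colored. Write $t = \ppt(G;S)$ and let $D_i$ denote the set of vertices colored for the first time during timestep $i$, so that $D_1 = N[S]$ (the domination step), $D_2,\dots,D_t$ correspond to the forcing steps, and $n = \sum_{i=1}^{t}|D_i|$. As in zero forcing, I would fix a chronological list of forces for the forcing steps; this decomposes $V$ into forcing chains in which every vertex not colored during the domination step is forced by exactly one predecessor, and every chain head is a vertex of $N[S]$.

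For the domination step the bound is immediate: $|D_1| = |N[S]| \le |S| + |N(S)\setminus S| \le |S| + |S|\Delta$, since each of the $|S|$ vertices of $S$ contributes at most $\Delta$ neighbors. The decisive structural observation is that no vertex of $S$ ever performs a force: after the domination step every vertex of $N[S]$ is colored, so in particular each $s\in S$ has no uncolored neighbor for the remainder of the process. Consequently, a chain that performs at least one force must have its head in $N(S)\setminus S$, and since distinct chains have distinct heads, there are at most $|N(S)\setminus S| \le |S|\Delta$ such forcing chains.

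The last ingredient is that a single forcing chain advances by at most one vertex per timestep: a vertex can force only after it has itself been colored, so the forces along a chain occur at strictly increasing timesteps. Hence in each forcing step $i\ge 2$ the number of forces, which equals $|D_i|$, is at most the number of forcing chains, giving $|D_i|\le |S|\Delta$. Summing, $n = \sum_{i=1}^{t}|D_i| \le (|S| + |S|\Delta) + (t-1)\,|S|\Delta = |S|(1 + \Delta t)$, which rearranges to the stated bound.

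The main obstacle is obtaining the sharp per-step bound $|S|\Delta$ rather than the naive $|N[S]| \le |S|(\Delta+1)$ that a careless chain count would give; the latter is too weak to yield the claimed inequality once $n$ exceeds $|S|(\Delta+1)$. Everything hinges on the observation that the $|S|$ vertices of $S$ are inert after domination, which simultaneously removes them from the pool of heads of forcing chains and lets the leftover $|S|$ be absorbed into the additive $1$ in $|S|(1+\Delta t)$. The remaining steps are routine degree counting together with the standard synchronous-step argument.
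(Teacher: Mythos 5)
Your proof is correct. Note that the paper itself gives no proof of this lemma---it is quoted from \cite{ferrero,proptime1}---but your argument is essentially the standard one underlying those references: bound the domination step by $|S|(\Delta+1)$, observe that the vertices of $S$ are inert after domination so that every forcing chain that actually performs a force has its head in $N(S)\setminus S$ (of which there are at most $|S|\Delta$), conclude that each of the remaining $\ppt(G;S)-1$ timesteps colors at most $|S|\Delta$ vertices, and sum to get $n\le |S|\left(1+\Delta\cdot\ppt(G;S)\right)$, which rearranges to the stated bound.
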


\begin{theorem}\label{pth_bound}
Let $G$ be a nonempty graph. Then, $\zpth(G)\ge\big\lceil2\sqrt{\frac{n}{\Delta}}-\frac{1}{\Delta}\big\rceil$, and this bound is tight.
\end{theorem}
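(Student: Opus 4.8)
The plan is to split the statement into the lower bound, which will follow directly from Lemma \ref{pth_bound_lemma} together with the arithmetic--geometric mean inequality, and the tightness, which will require exhibiting an explicit family.

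For the lower bound, fix any power dominating set $S$ and write $t = |S| \ge 1$. Lemma \ref{pth_bound_lemma} gives
\[
|S| + \ppt(G;S) \;\ge\; t + \frac{1}{\Delta}\Big(\frac{n}{t} - 1\Big) \;=\; t + \frac{n}{\Delta t} - \frac{1}{\Delta}.
\]
Applying AM--GM to the positive reals $t$ and $n/(\Delta t)$ yields $t + n/(\Delta t) \ge 2\sqrt{n/\Delta}$ (with equality at the real value $t = \sqrt{n/\Delta}$), so $|S| + \ppt(G;S) \ge 2\sqrt{n/\Delta} - 1/\Delta$. Since the left-hand side is an integer, it is bounded below by $\big\lceil 2\sqrt{n/\Delta} - 1/\Delta\big\rceil$, and taking the minimum over all power dominating sets $S$ gives the claimed inequality. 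The only subtlety is that the AM--GM minimizer $\sqrt{n/\Delta}$ need not be an integer, but this causes no difficulty: the continuous bound holds for every integer $t$, and the integrality of $|S| + \ppt(G;S)$ is exactly what lets us round up at the end.

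For tightness, I would realize the equality case $t = \sqrt{n/\Delta}$ with a balanced disjoint union of spiders. Fix integers $t \ge 1$ and $\Delta \ge 2$, let $H$ be the spider consisting of a center with $\Delta$ legs, each a path on $t$ vertices, and set $G = \dot\cup_{i=1}^t H_i$. Then $n = t(\Delta t + 1)$ and the maximum degree of $G$ is $\Delta$. Taking $S$ to be the set of the $t$ centers, the domination step colors each center and the first vertex of each of its legs, after which each leg forces inward one vertex per timestep; hence $\ppt(H_i;c_i) = t$, and since propagation time is maxing (as verified in the proof of Corollary \ref{cor_comp}), $\ppt(G;S) = t$. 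Thus $|S| + \ppt(G;S) = 2t$, so $\zpth(G) \le 2t$.

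It remains to check that the lower bound also evaluates to $2t$ for this $G$, which I expect to be the main technical point. With $n/\Delta = t^2 + t/\Delta$, the elementary estimates $\sqrt{t^2 + t/\Delta} < t + 1/(2\Delta)$ and $\sqrt{t^2 + t/\Delta} > t$ give
\[
2t - 1 \;\le\; 2t - \frac{1}{\Delta} \;<\; 2\sqrt{\tfrac{n}{\Delta}} - \frac{1}{\Delta} \;<\; 2t,
\]
so $\big\lceil 2\sqrt{n/\Delta} - 1/\Delta\big\rceil = 2t$. Combined with the lower bound, this forces $\zpth(G) = 2t$, establishing tightness. The two obstacles to watch are pinning down the propagation-time convention so that $\ppt$ of a single spider is exactly $t$ (equivalently, so that Lemma \ref{pth_bound_lemma} holds with equality on $H$), and confirming that the ceiling of the irrational-looking bound lands precisely on the integer $2t$; both reduce to the estimates displayed above.
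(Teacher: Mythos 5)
Your proof is correct, but the tightness half takes a genuinely different route from the paper. For the lower bound, you and the paper do essentially the same thing: both invoke Lemma \ref{pth_bound_lemma} and minimize $s+\frac{1}{\Delta}\left(\frac{n}{s}-1\right)$ over positive reals; the paper does this with calculus (critical point at $s=\sqrt{n/\Delta}$, second-derivative test), while you use AM--GM, which is marginally more elementary but functionally identical, and both conclude by integrality of $|S|+\ppt(G;S)$. For tightness, however, the paper simply points to paths and cycles: with $\Delta=2$ the bound reads $\big\lceil\sqrt{2n}-\frac{1}{2}\big\rceil$, which matches $\zpth(P_n)$ and $\zpth(C_n)$ as computed in Proposition \ref{pth_path} --- a result whose proof in turn leans on the PSD throttling values for paths and cycles (Theorem \ref{thm_zth_path_cycle}, imported from \cite{carlson2}). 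You instead build a self-contained family: $t$ disjoint spiders, each with $\Delta$ legs of $t$ vertices, take the centers as the power dominating set, verify $\ppt = t$ directly (using the maxing property checked in Corollary \ref{cor_comp}), and then confirm by the estimates $t<\sqrt{t^2+t/\Delta}<t+\frac{1}{2\Delta}$ that the ceiling lands exactly on $2t$; your verification of all these steps is sound. The trade-off: the paper's route is essentially free given that Proposition \ref{pth_path} is proved anyway, and its witnesses are connected graphs; your construction costs a bit more arithmetic but is independent of the external PSD result and exhibits equality for every $\Delta\geq 2$ rather than only $\Delta=2$, at the price of the witnesses being disconnected for $t\geq 2$.
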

\proof
Since $G$ is nonempty, we have $\Delta>0$. Let $\mathcal{P}(G)$ denote the set of all power dominating sets of $G$. By Lemma \ref{pth_bound_lemma}, 
\[\zpth(G)=\min_{S\in \mathcal{P}(G)}\{|S|+\ppt(G;S)\}\ge\min_{S\in \mathcal{P}(G)}\left\{|S|+\frac{1}{\Delta}\left(\frac{n}{|S|}-1\right)\right\}\ge\min_{s>0}\left\{s+\frac{1}{\Delta}\left(\frac{n}{s}-1\right)\right\}.\]
To compute the last minimum, let us minimize $t(s)\vcentcolon=s+\frac{1}{\Delta}(\frac{n}{s}-1),s>0$. Since $t'(s)=1-\frac{n}{\Delta s^2}$, $s=\sqrt{\frac{n}{\Delta}}$ is the only critical point of $t(s)$. Since $t''(s)=\frac{2n}{\Delta s^3}>0$ for $s>0$, we have that $t(\sqrt{\frac{n}{\Delta}})=\sqrt{\frac{n}{\Delta}}+\frac{1}{\Delta}(n/\sqrt{\frac{n}{\Delta}}-1)=2\sqrt{\frac{n}{\Delta}}-\frac{1}{\Delta}$ is the global minimum of $t(s)$. Thus, 
$\zpth(G)=\lceil\zpth(G)\rceil\geq \left\lceil 2\sqrt{\frac{n}{\Delta}}-\frac{1}{\Delta}\right\rceil$.
The bound is tight, e.g., for paths and cycles; see Proposition \ref{pth_path}.
\qed

\begin{theorem}[\cite{carlson2}]\label{thm_zth_path_cycle}
$\zth_{+}(P_n)=\big\lceil\sqrt{2n}-\frac{1}{2}\big\rceil$ for $n\ge1$ and $\zth_{+}(C_n)=\big\lceil\sqrt{2n}-\frac{1}{2}\big\rceil$ for $n\ge4$.
\end{theorem}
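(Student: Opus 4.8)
The plan is to reduce both equalities to the single discrete optimization $\min\{k+t : k(2t+1)\ge n\}$, where $k=|S|$ and $t=\zpt_+(G;S)$, and then to solve that optimization exactly. For the lower bound, I would first observe that any application of a color change rule only colors vertices adjacent to already-colored vertices, so after $t$ timesteps the colored set is contained in the set of vertices within distance $t$ of the initial set $S$. If $S$ is a PSD zero forcing set with $\zpt_+(G;S)=t$, this forces $S$ to be a distance-$t$ dominating set. On $P_n$ and $C_n$ every ball of radius $t$ has at most $2t+1$ vertices, so $k(2t+1)\ge n$. Combining with AM--GM, $2(k+t)+1 = 2k+(2t+1)\ge 2\sqrt{2k(2t+1)}\ge 2\sqrt{2n}$, whence $k+t\ge\sqrt{2n}-\tfrac12$; since $k+t$ is an integer this gives $\zth_+(G)\ge\big\lceil\sqrt{2n}-\tfrac12\big\rceil$.

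To see that $k(2t+1)\ge n$ is also sufficient, I would verify the forcing dynamics on each family. On a path, a single seed colors both of its neighbors in one timestep, because deleting the seed splits the path into two components and the PSD rule acts within each component separately; iterating, one seed spreads outward at unit speed in both directions. Placing $k$ seeds at the centers of $k$ consecutive blocks of length $2t+1$ tiles $P_n$ whenever $k(2t+1)\ge n$, and each vertex at distance $d\le t$ from its block's seed is colored by step $d$, so $\zpt_+\le t$. On a cycle a single seed cannot force, since deleting it leaves one path component in which the seed retains two uncolored neighbors; hence $|S|\ge 2$. With $k\ge 2$ equally spaced seeds the cycle is cut into $k$ arcs, each filled from both endpoints, again giving $\zpt_+\le t$ as soon as every arc has at most $2t+1$ vertices, i.e. $k(2t+1)\ge n$.

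It then remains to solve $\min\{k+t : k(2t+1)\ge n,\ k\ge1,\ t\ge0 \text{ integers}\}$ (with $k\ge2$ for the cycle) and show the value equals $m:=\big\lceil\sqrt{2n}-\tfrac12\big\rceil$. Writing $k=m-t$, the constraint becomes $h(t):=(m-t)(2t+1)\ge n$, a downward parabola with real maximum $\tfrac{(2m+1)^2}{8}$ attained at $t^\ast=\tfrac{2m-1}{4}$. The step I expect to be the crux is the rounding: since $2m-1$ is odd, $t^\ast$ is an odd multiple of $\tfrac14$, so its nearest integer $t_0$ satisfies $|t_0-t^\ast|=\tfrac14$ exactly, giving $h(t_0)=\tfrac{(2m+1)^2-1}{8}$. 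Because $2m+1$ is odd, $(2m+1)^2$ is odd while $8n$ is even, so the bound $(2m+1)^2\ge 8n$ coming from $m\ge\sqrt{2n}-\tfrac12$ upgrades to $(2m+1)^2\ge 8n+1$; hence $h(t_0)\ge n$ and $(k,t)=(m-t_0,t_0)$ is feasible with $k+t=m$.

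Finally I would dispatch the side conditions. From $t_0\le t^\ast+\tfrac14=\tfrac{m}{2}$ one gets $k=m-t_0\ge\tfrac{m}{2}$, so $k\ge1$ for every $m\ge1$ (covering the path for all $n\ge1$) and $k\ge2$ once $m\ge3$, which by the definition of $m$ is precisely the range $n\ge4$ in which the cycle's extra constraint $k\ge2$ is non-binding. The main obstacle is thus the parity/rounding argument of the previous paragraph, ensuring the integer $(k,t)$ meeting the ceiling value actually satisfies $k(2t+1)\ge n$; everything else is bookkeeping, together with confirming directly that the single-seed impossibility forces the cycle formula to begin at $n=4$.
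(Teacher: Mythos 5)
This statement is one the paper does not prove at all: it is imported from reference \cite{carlson2}, so there is no in-paper argument to compare against, and your proposal has to stand on its own. Checking it, the argument is correct and self-contained. The reduction of both families to the integer program $\min\{k+t : k(2t+1)\ge n\}$ is sound in both directions: the lower bound because each force colors a neighbor of an already-colored vertex, so a set with propagation time $t$ must distance-$t$ dominate, and balls in $P_n$, $C_n$ have at most $2t+1$ vertices (then AM--GM gives $k+t\ge\sqrt{2n}-\tfrac12$); the upper bound because on $P_n$ (and on $C_n$ once $|S|\ge 2$) a colored vertex's two uncolored neighbors lie in different components of the graph minus the colored set, so the PSD rule colors \emph{every} uncolored neighbor of every colored vertex at each step, making propagation time equal to $\max_v d(v,S)$. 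The crux you identified --- solving the integer program --- also checks out: with $h(t)=(m-t)(2t+1)$, the vertex value is $(2m+1)^2/8$ at $t^\ast=(2m-1)/4$, the nearest integer $t_0$ sits at distance exactly $\tfrac14$, and the parity upgrade $(2m+1)^2\ge 8n \Rightarrow (2m+1)^2\ge 8n+1$ gives $h(t_0)=\bigl((2m+1)^2-1\bigr)/8\ge n$; your handling of $k\ge1$ (paths, all $n\ge1$) and $k\ge2$ exactly when $m\ge3$, i.e.\ $n\ge4$ (which is also why $C_3$ is excluded), is right. One slip worth fixing: on the cycle, an arc of $2t+1$ uncolored vertices filled from both ends needs $t+1$ steps, not $t$; the correct statement is that each arc may have at most $2t$ vertices, and $k\ge2$ equally spaced seeds achieve this precisely when $n-k\le 2tk$, i.e.\ $k(2t+1)\ge n$. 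Since that last inequality is what you actually feed into the optimization, nothing downstream breaks, but the intermediate ``at most $2t+1$ vertices'' claim as written is false and should be corrected.
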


\begin{proposition}\label{pth_path}
$\zpth(P_n)=\big\lceil\sqrt{2n}-\frac{1}{2}\big\rceil$ for $n\geq 1$ and $\zpth(C_n)=\big\lceil\sqrt{2n}-\frac{1}{2}\big\rceil$ for $n\geq 3$.
\end{proposition}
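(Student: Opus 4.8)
The plan is to sandwich $\zpth$ between a lower bound that comes essentially for free from Theorem~\ref{pth_bound} and a matching upper bound produced by an explicit, evenly spaced family of power dominating sets. For $P_n$ with $n\ge 3$ and for $C_n$ with $n\ge 3$ we have $\Delta=2$, so Theorem~\ref{pth_bound} immediately gives $\zpth(G)\ge\lceil 2\sqrt{n/2}-1/2\rceil=\lceil\sqrt{2n}-\tfrac12\rceil$. Hence essentially the entire content of the proposition is the reverse inequality; the few remaining small cases ($P_1$, $P_2$, and $C_3=K_3$) I would simply check by hand, since there $\Delta<2$ and the general lower bound either does not apply or must be re-examined.

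For the upper bound I would exploit that both $P_n$ and $C_n$ have maximum degree $2$, which makes the coloring dynamics completely transparent. Choose $k\ge 1$ vertices spaced as evenly as possible. In the domination step each chosen vertex becomes a colored block of three consecutive vertices (its closed neighborhood), and in each subsequent forcing step every colored block, having exactly one uncolored neighbor at each of its two ends, grows by one vertex on each side. Thus after $t$ forcing steps each block covers $2t+3$ vertices, the whole configuration colors at least $k(2t+3)$ vertices, and $\ppt(G;S)\le t+1$. Consequently, whenever integers $k\ge1$, $t\ge0$ satisfy $k(2t+3)\ge n$, there is a power dominating set $S$ with $|S|+\ppt(G;S)\le k+t+1$, so
\[\zpth(G)\le\min\{\,k+t+1:\ k\ge1,\ t\ge0,\ k(2t+3)\ge n\,\}.\]
On the cycle this covering count is exact with no boundary corrections; on the path the two end blocks only over-reach the ends of $P_n$, which can only help, so a routine placement argument shows the same bound holds.

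Evaluating this discrete minimum is where the real work lies, and I expect it to be the main obstacle. Writing $m=\lceil\sqrt{2n}-\tfrac12\rceil$, I would try to realize the value $m$ exactly, i.e.\ find integers with $k+t+1=m$ and $k(2t+3)\ge n$. Substituting $t=m-1-k$ reduces the covering constraint to maximizing the downward parabola $g(k)=k(2m+1-2k)$ over integers. Its continuous maximum is $(2m+1)^2/8$, and $m\ge\sqrt{2n}-\tfrac12$ gives $(2m+1)^2/8\ge n$; a nearest-integer estimate (the leading coefficient is $-2$, so rounding the vertex costs at most $2\cdot(1/2)^2=\tfrac12$) yields an integer $k$ with $g(k)\ge (2m+1)^2/8-\tfrac12\ge n-\tfrac12$, hence $g(k)\ge n$ by integrality of $g(k)$ and $n$. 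The delicate part is precisely this integrality bookkeeping together with confirming that the chosen $k$ lies in the admissible range $1\le k\le m-1$ (so that $t\ge0$); these constraints can fail only for the smallest $n$, which is exactly why the handful of small cases are treated directly. With such $k,t$ in hand the constructed set gives $\zpth(G)\le m$, matching the lower bound and yielding equality.

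Finally, I would note the conceptual reason the answer coincides with the positive semidefinite throttling number of Theorem~\ref{thm_zth_path_cycle}: on a degree-$2$ graph the domination step reproduces exactly one round of PSD forcing, since a selected vertex colors both of its neighbors, which lie in different components of $G-S$. This is not needed for the argument, but it explains the identical formula and makes clear that the coloring analysis above is forced, with any subtlety confined to the very smallest orders, which are verified directly.
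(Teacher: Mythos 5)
Your proposal is correct, but it takes a genuinely different route from the paper. The paper's proof is a reduction: it observes that on a path any nonempty set, and on a cycle any set of size at least two, drives the power domination process and the PSD zero forcing process identically (each colored vertex has at most one uncolored neighbor, and uncolored neighbors of a colored vertex lie in different components of $G-S$), so $\ppt(G;S)=\zpt_+(G;S)$; after a short argument that on $C_n$ ($n\ge 4$) singletons never beat a well-chosen pair, the value $\big\lceil\sqrt{2n}-\frac{1}{2}\big\rceil$ is then imported wholesale from Theorem~\ref{thm_zth_path_cycle} of \cite{carlson2}. Amusingly, the ``conceptual remark'' in your last paragraph \emph{is} the paper's entire proof. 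Your argument instead proves the result from scratch: the lower bound from Theorem~\ref{pth_bound} (legitimate and non-circular, since that theorem's proof depends only on Lemma~\ref{pth_bound_lemma} and not on this proposition), and the upper bound from evenly spaced sets plus the discrete optimization $\min\{k+t+1 : k(2t+3)\ge n\}$. Your integrality bookkeeping checks out, and is in fact slightly better than you claim: since $2m+1$ is odd, the vertex $(2m+1)/4$ of the parabola $g(k)=k(2m+1-2k)$ lies at distance exactly $\tfrac14$ from the nearest integer, so $g(k)\ge (2m+1)^2/8-\tfrac18\ge n-\tfrac18$, and integrality gives $g(k)\ge n$; the admissibility constraint $1\le k\le m-1$ fails only when $m=1$, i.e.\ $n=1$, which you rightly defer to a hand check (as do $P_2$ and, in the paper, $C_3$). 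The trade-off: the paper's route is shorter and exposes the structural coincidence with PSD forcing, but leans on an external theorem; yours is self-contained, in effect re-deriving that theorem's optimization, and has the side benefit of exhibiting explicit optimal throttling sets and showing directly that the bound of Theorem~\ref{pth_bound} is attained by paths and cycles.
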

\proof
Let $S$ be an arbitrary nonempty subset of $V(P_n)$. If any vertex in $S$ has two neighbors which are not in $S$, then both of these neighbors are in different components of $P_n-S$. Moreover, each vertex in $N[S]$ has at most one neighbor which is not in $N[S]$. Thus, the PSD zero forcing color change rules and the power domination color change rules both dictate that at each timestep, the non-colored neighbors of every colored vertex of $P_n$ will be colored. Hence, since any nonempty subset $S$ of $V(P_n)$ is both a power dominating set and a PSD zero forcing set, $\ppt(P_n;S)=\zpt_+(P_n;S)$. Thus, $\zpth(P_n)=\min\{|S|+\ppt(P_n;S)\colon S\subset V(P_n),|S|\ge1\}=\min\{|S|+\zpt_+(P_n;S)\colon S\subset V(P_n),|S|\ge1\}=\zth_+(P_n)=\big\lceil\sqrt{2n}-\frac{1}{2}\big\rceil$, where the last equality follows from Theorem \ref{thm_zth_path_cycle}.

Clearly $\zpth(C_n)=\big\lceil\sqrt{2n}-\frac{1}{2}\big\rceil$ for $n=3$, so suppose that $n\ge4$. By a similar reasoning as above, and since any set $S\subset V(C_n)$ of size at least 2 is both a power dominating set and a PSD zero forcing set, it follows that $\ppt(P_n;S)=\zpt_+(P_n;S)$. If $\{v\}\subset V(C_n)$ is a power throttling set of $C_n$ and $u$ is a vertex of $C_n$ at maximum distance from $v$, then $\{u,v\}$ is also a power throttling set, since $\ppt(C_n;\{u,v\})\le\ppt(C_n;\{v\})-1$ for $n\ge4$. Thus, $\zpth(C_n)=\min\{|S|+\ppt(C_n;S)\colon S\subset V(C_n),|S|\ge1\}=\min\{|S|+\ppt(C_n;S)\colon S\subset V(C_n),|S|\ge2\}=\min\{|S|+\zpt_+(C_n;S)\colon S\subset V(C_n),|S|\ge2\}=\zth_+(C_n)=\big\lceil\sqrt{2n}-\frac{1}{2}\big\rceil$, where the last equality follows from Theorem \ref{thm_zth_path_cycle}.
\qed

\begin{proposition}\label{pth_disjoint_union}
Let $G_1,G_2$ be graphs and $G=G_1\dot\cup G_2$. Then, 
\begin{eqnarray*}
\zpth(G)&\geq& \max\{\gamma_P(G_1)+\zpth(G_2),\gamma_P(G_2)+\zpth(G_1)\},\\
\zpth(G)&\le&\gamma_P(G_1)+\gamma_P(G_2)+\max\{\ppt(G_1),\ppt(G_2)\},
\end{eqnarray*}
and these bounds are tight.
\end{proposition}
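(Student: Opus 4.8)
The plan is to prove the two inequalities separately, establishing the lower bound first and then the upper bound, and finally to exhibit a family of graphs on which each is tight.

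For the lower bound, I would start from a power throttling set $S$ of $G=G_1\dot\cup G_2$, so that $\zpth(G)=|S|+\ppt(G;S)$. As established in the proof of Corollary~\ref{cor_comp} (verifying assumptions 2) and 3) of Theorem~\ref{thm:comp} for power domination), $S\cap V(G_1)$ and $S\cap V(G_2)$ are power dominating sets of $G_1$ and $G_2$ respectively, and $\ppt(G;S)=\max\{\ppt(G_1;S\cap V(G_1)),\ppt(G_2;S\cap V(G_2))\}$. Writing $S_i:=S\cap V(G_i)$, I would bound
\[
|S|+\ppt(G;S)=|S_1|+|S_2|+\max\{\ppt(G_1;S_1),\ppt(G_2;S_2)\}.
\]
Since $|S_2|\ge\gamma_P(G_2)$ by definition of the power domination number, and since $|S_1|+\ppt(G_1;S_1)\ge\zpth(G_1)$ because $S_1$ is a power dominating set of $G_1$, dropping the other term from the max gives $|S|+\ppt(G;S)\ge\gamma_P(G_2)+\zpth(G_1)$. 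By the symmetric argument (bounding $|S_1|\ge\gamma_P(G_1)$ and keeping the $G_2$ contribution), the same quantity is at least $\gamma_P(G_1)+\zpth(G_2)$, and taking the maximum of the two yields the lower bound.

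For the upper bound, I would simply produce a feasible throttling set. Let $S_1^*$ be a minimum power dominating set of $G_1$ with $\ppt(G_1;S_1^*)=\ppt(G_1)$, and likewise $S_2^*$ for $G_2$; these exist by definition of $\ppt(G_i)$. Then $S^*:=S_1^*\dot\cup S_2^*$ is a power dominating set of $G$ with $|S^*|=\gamma_P(G_1)+\gamma_P(G_2)$ and, by the additivity of the propagation behavior noted above, $\ppt(G;S^*)=\max\{\ppt(G_1),\ppt(G_2)\}$. Since $\zpth(G)\le|S^*|+\ppt(G;S^*)$, the upper bound follows immediately.

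The routine parts are the two inequalities; the step requiring the most care is tightness, so I would focus effort there. For the upper bound, taking $G_1=G_2=P_n$ should make both sides coincide once $\zpth(P_n)$, $\gamma_P(P_n)=1$, and $\ppt(P_n)$ are computed from Proposition~\ref{pth_path}. For the lower bound, I expect a path or cycle example to again serve, choosing $G_1,G_2$ so that one summand achieves $\zpth$ and the other contributes only its (unit) power domination number without forcing the max to be dominated by the wrong term; verifying that no cheaper throttling set of the union exists is the main obstacle, and I would lean on the exact values from Proposition~\ref{pth_path} to check equality directly rather than bounding abstractly.
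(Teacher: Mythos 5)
Your proofs of the two inequalities are correct and essentially the paper's own argument: the upper bound is obtained exactly as in the paper (union of minimum power dominating sets realizing $\ppt(G_1)$ and $\ppt(G_2)$), and your lower bound is the paper's argument phrased directly rather than as a contradiction — restricting a power throttling set of $G$ to each component and using $|S_2|\ge\gamma_P(G_2)$, $|S_1|+\ppt(G_1;S_1)\ge\zpth(G_1)$ is the same inequality chain. Citing the component-wise facts verified in Corollary \ref{cor_comp} is legitimate.

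The genuine gap is in tightness, which is part of the statement. Your upper-bound example $G_1=G_2=P_n$ fails for all large $n$. Concretely, for $n=10$: $\gamma_P(P_{10})=1$ and $\ppt(P_{10})=5$ (a minimum power dominating set is a single vertex, optimally placed at the center), so the right-hand side equals $1+1+5=7$; but taking the third and eighth vertices in each copy gives a power dominating set of $P_{10}\dot\cup P_{10}$ of size $4$ with propagation time $2$, so $\zpth(P_{10}\dot\cup P_{10})\le 6<7$. The structural reason is that the upper bound is built only from \emph{minimum} power dominating sets, whereas throttling on paths profits from using many vertices: $\ppt(P_n)=\lceil (n-1)/2\rceil$ grows linearly in $n$, while $\zpth(P_n\dot\cup P_n)=O(\sqrt{n})$, so equality can only hold for small $n$ (it does hold for $n\le 9$, which may be why the example looks plausible). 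Note also that Proposition \ref{pth_path} gives $\zpth(P_n)$, not $\ppt(P_n)$, so it does not even supply the values your verification would need. The paper instead takes $G$ to be the disjoint union of two stars: there $\gamma_P(G_i)=\ppt(G_i)=1$ and $\zpth(G_i)=2$, while any power dominating set of $G$ needs at least one vertex per component and has propagation time at least $1$, so $\zpth(G)=3$ and both bounds hold with equality simultaneously. Your lower-bound sketch can be made rigorous — e.g., $G_1=P_n$, $G_2=K_1$: every power dominating set of $G$ must contain the isolate, which contributes nothing to the propagation time, so $\zpth(G)=1+\zpth(P_n)$, matching the lower bound for $n\ge 2$ — but as written you leave it unverified, and the flawed upper-bound example means the tightness claim is not established by your argument.
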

\proof
We first establish the lower bound. Suppose for contradiction that $\zpth(G)<\gamma_P(G_1)+\zpth(G_2)$, and let $S$ be a power throttling set of $G$. Thus, $|S|+\ppt(G;S)<\gamma_P(G_1)+\zpth(G_2)$. Note that $|S\cap V(G_2)|\le|S|-\gamma_P(G_1)$, since $S\cap V(G_1)$ must be a power dominating set of $G_1$. Moreover, $\ppt(G_2;S\cap V(G_2))\le\ppt(G;S)$. Thus,
\begin{eqnarray*}
\zpth(G_2)&\le&|S\cap V(G_2)|+\ppt(G_2;S\cap V(G_2))\\
&\le&|S|-\gamma_P(G_1)+\ppt(G;S)\\
&<&\zpth(G_2),
\end{eqnarray*} a contradiction. Thus, $\zpth(G)\geq \gamma_P(G_1)+\zpth(G_2)$. Similarly, $\zpth(G)\geq \gamma_P(G_2)+\zpth(G_1)$.
We now establish the upper bound. Let $S_1\subset V(G_1)$ and $S_2\subset V(G_2)$ be power dominating sets such that $\ppt(G_1;S_1)=\ppt(G_1)$ and $\ppt(G_2;S_2)=\ppt(G_2)$. Let $S=S_1\cup S_2$. Then $\zpth(G)\le|S|+\ppt(G;S)=|S_1|+|S_2|+\max\{\ppt(G_1;S_1)+\ppt(G_2;S_2)\}=\gamma_P(G_1)+\gamma_P(G_2)+\max\{\ppt(G_1),\ppt(G_2)\}$. Both bounds are tight, e.g., when $G$ is the disjoint union of two stars.\qed
\vspace{9pt}

\begin{theorem} \label{thm:th-ub}
Let $G_1$ and $G_2$ be graphs such that $G_1 \cap G_2\simeq K_k$. Then \[\max\{\zpth(G_1),\zpth(G_2)\}\leq \zpth(G_1 \cup G_2) \leq \gamma_P(G_1) + \gamma_P(G_2) + k + \max\{\ppt(G_1),\ppt(G_2)\},\]
and these bounds are tight.
\end{theorem}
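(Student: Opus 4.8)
The plan is to prove the two inequalities separately, in both cases exploiting that the shared $K_k$ is a separator of $G_1\cup G_2$: every vertex of $V(G_1)\setminus V(K_k)$ has all of its neighbors in $V(G_1)$, every vertex of $V(G_2)\setminus V(K_k)$ has all of its neighbors in $V(G_2)$, and there are no edges between $V(G_1)\setminus V(K_k)$ and $V(G_2)\setminus V(K_k)$. I will also repeatedly use the elementary monotonicity fact that enlarging a power dominating set cannot increase its power propagation time, together with the observation that, since $K_k$ is a clique, seeding a \emph{single} vertex of $K_k$ colors all of $V(K_k)$ already in the domination step.

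For the upper bound I would take minimum power dominating sets $S_1$ of $G_1$ and $S_2$ of $G_2$ realizing $\ppt(G_1;S_1)=\ppt(G_1)$ and $\ppt(G_2;S_2)=\ppt(G_2)$, and set $S=S_1\cup S_2\cup V(K_k)$, so that $|S|\le\gamma_P(G_1)+\gamma_P(G_2)+k$. The key point is that, because every vertex of $K_k$ is now a seed, at the domination step each clique vertex colors its entire closed neighborhood and thereafter has no uncolored neighbor, so no clique vertex ever performs a force. Every force (timestep $\ge 2$) is therefore carried out by a non-clique vertex, whose neighborhood lies entirely on one side; hence the coloring of $V(G_1)$ under $S$ proceeds exactly as the $G_1$-process from seed $S_1\cup V(K_k)$, and likewise for $V(G_2)$. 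I would make this precise by an induction on the timestep showing that the colored set of $G$ restricted to each side agrees with the corresponding single-side process. Since $S_1\cup V(K_k)\supseteq S_1$, monotonicity gives $\ppt(G_1;S_1\cup V(K_k))\le\ppt(G_1)$, and symmetrically for $G_2$, so $\ppt(G;S)\le\max\{\ppt(G_1),\ppt(G_2)\}$, which yields the stated bound.

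For the lower bound it suffices to show $\zpth(G_1)\le\zpth(G_1\cup G_2)$, the other inequality being symmetric. I would start from a power throttling set $S$ of $G:=G_1\cup G_2$ with $t:=\ppt(G;S)$ and build a power dominating set $S_1\subseteq V(G_1)$ with $|S_1|+\ppt(G_1;S_1)\le|S|+t$. The natural candidate is $S_1=S\cap V(G_1)$, possibly augmented by a single vertex of $K_k$, whose seeding colors all of $V(K_k)$ at the first timestep and thus gives the $G_1$-process a head start over the $G$-process on the clique. To control the propagation time I would prove, by induction on the timestep, the coupling $\tau_1(v)\le\tau_G(v)$ for every $v\in V(G_1)$, where $\tau_G$ and $\tau_1$ denote coloring times in the $G$-process from $S$ and the $G_1$-process from $S_1$; for targets in $V(G_1)\setminus V(K_k)$ this is immediate, because their neighborhoods, and the ``unique uncolored neighbor'' condition of their forcers, are confined to $G_1$.

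The step I expect to be the main obstacle is handling the vertices of $V(K_k)$ in the lower bound, where the coupling can genuinely be driven by ``reflected'' forces coming from $V(G_2)\setminus V(K_k)$, while simultaneously keeping $|S_1|\le|S|$. The resolution I have in mind rests on two structural facts. First, because $K_k$ is a separator, a vertex of $V(G_2)\setminus V(K_k)$ can become colored, and hence assist in coloring a clique vertex, only after the ``gateway'' has been opened by some clique vertex already colored from the $G_1$-side; this confines all influence of $G_2$ to $V(K_k)$ and prevents it from ever coloring a vertex of $V(G_1)\setminus V(K_k)$. Second, since one seeded clique vertex dominates all of $V(K_k)$ in a single step, at most one extra seed is ever needed to reproduce the help provided by $G_2$, and this extra seed should be charged against a vertex of $S\cap(V(G_2)\setminus V(K_k))$ that is discarded when passing from $S$ to $S\cap V(G_1)$. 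The crux is verifying that this charging is always valid, i.e.\ that whenever the augmentation is actually needed the set $S$ indeed spends a vertex strictly inside $G_2$, so that $|S_1|$ never exceeds $|S|$. Finally, I would establish tightness of both bounds with simple glued examples, such as unions of stars or complete graphs identified along a common $K_k$, in the spirit of Proposition~\ref{pth_disjoint_union}.
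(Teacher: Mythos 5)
Your two inequalities follow essentially the paper's own route: the same seed $S_1\cup S_2\cup V(K_k)$ for the upper bound (your observation that no clique vertex ever performs a force, so the process splits exactly along the separator, is a clean version of the paper's argument, and the monotonicity fact you invoke --- supersets cannot increase power propagation time --- is valid for the synchronous power domination process by the standard coupling), and the same ``restrict $S$ to $V(G_1)$, augment by one clique vertex only when affordable'' scheme for the lower bound. Your resolution of the crux --- a clique vertex can force into $V(G_2)\setminus V(K_k)$ only after \emph{all} of $K_k$ is colored, so $G_2$ can never help unless $S$ spends a vertex in $V(G_2)$, which is exactly when the extra seed can be charged --- is precisely the content of the paper's four-case analysis. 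So the two inequalities are fine.

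The genuine gap is the tightness of the upper bound, which is part of the statement. Your proposed examples (stars or complete graphs identified along a common $K_k$) do establish tightness of the lower bound, but they cannot establish tightness of the upper bound. If $G_1$ and $G_2$ are two copies of $K_{k+m}$ sharing a common $K_k$, then any shared vertex is a dominating vertex of $G_1\cup G_2$, so $\zpth(G_1\cup G_2)=2$, whereas $\gamma_P(G_1)+\gamma_P(G_2)+k+\max\{\ppt(G_1),\ppt(G_2)\}=k+3$; glued stars likewise give $\zpth(G_1\cup G_2)\le 3$ against a bound of at least $4$. The obstruction is structural: upper-bound tightness requires the gluing to make power domination strictly \emph{harder}, namely every power dominating set of $G_1\cup G_2$ must have at least $\gamma_P(G_1)+\gamma_P(G_2)+k$ vertices, and in addition no set of that size may finish in a single timestep. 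The paper engineers this with a gadget: each clique vertex $v_i$ receives a pendant leaf $u_i$ and a pendant neighbor $w_i$ in $G_1$ (and $u_i'$, $w_i'$ in $G_2$), and three pendant paths are hung on each $w_i$ and $w_i'$. In the union each $v_i$ then has two pendant leaves, which forces a third vertex per index into any power dominating set (so $\gamma_P(G_1\cup G_2)\ge 3k$), and the pendant paths make every dominating set large, so no power dominating set of size $3k$ can have propagation time $1$; hence $\zpth(G_1\cup G_2)=3k+2$, meeting the bound exactly. Some construction of this kind, not a simple gluing of stars or cliques, is needed to complete your proof.
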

\proof
Let $K=V(G_1 \cap G_2)$. We will first establish the upper bound.  Let $S_1\subset V(G_1)$ and $S_2\subset V(G_2)$ be minimum power dominating sets such that $\ppt(G_1;S_1)=\ppt(G_1)$ and $\ppt(G_2;S_2)=\ppt(G_2)$. Let $S = S_1 \cup S_2 \cup K$. $S$ is a power dominating set of $G_1\cup G_2$, since all vertices which are dominated in $G_1$ by $S_1$ and in $G_2$ by $S_2$ are dominated in $G_1\cup G_2$ by $S_1\cup S_2$, and all forces which occur in $G_1$ and in $G_2$ can also occur in $G_1\cup G_2$ (or are not necessary); this is because $N[K]$ is colored after the domination step, and the non-colored neighbors of any vertex $v \in V(G_1\cup G_2)$ at any forcing step  are a subset of the non-colored neighbors of $v$ at the same timestep in $G_1$ or $G_2$. For the same reason, a force which occurs in timestep $i\geq 2$ in $G_1$ or $G_2$ occurs in a timestep $j\leq i$ in $G_1\cup G_2$ (or is not necessary). Therefore, $\ppt(G_1 \cup G_2; S) \leq \max \{\ppt(G_1),\ppt(G_2)\}$, and $|S|\leq \gamma_P(G_1)+\gamma_P(G_2)+k$. Thus, $\zpth(G_1 \cup G_2) \leq |S|+\ppt(G_1\cup G_2;S)\leq \gamma_P(G_1) + \gamma_P(G_2) + k + \max \{\ppt(G_1),\ppt(G_2)\}$.

We will now establish the lower bound.
Let $S$ be a power throttling set of $G_1 \cup G_2$ and let $w$ be any vertex in $K$. We will consider four cases.

\emph{Case 1:} $S\cap K\neq \emptyset$. In this case, let $S_1=S\cap V(G_1)$ and $S_2=S\cap V(G_2)$.

\emph{Case 2:} $S\cap K = \emptyset$ but $S\cap V(G_1)\neq \emptyset$ and $S\cap V(G_2)\neq \emptyset$. In this case, let $S_1=(S\cap V(G_1))\cup \{w\}$ and $S_2=(S\cap V(G_2))\cup\{w\}$.

\emph{Case 3:} $S\subset V(G_1)\backslash V(G_2)$. In this case, let $S_1=S$ and $S_2=\{w\}$.

\emph{Case 4:} $S\subset V(G_2)\backslash V(G_1)$. In this case, let $S_1=\{w\}$ and $S_2=S$.

\noindent Note that in all cases, $S_1 \subset V(G_1)$, $S_2 \subset V(G_2)$, $|S_1|\leq |S|$, and $|S_2|\leq |S|$. In Cases 1 and 2, $K$ is dominated by $S_1$ in $G_1$ and by $S_2$ in $G_2$. Subsequently, at any forcing step, the non-colored neighbors of any vertex $v$ in $G_1$ or $G_2$ are a subset of the non-colored neighbors of $v$ at the same timestep in $G_1\cup G_2$. Thus, $S_1$ is a power dominating set of $G_1$ and $S_2$ is a power dominating set of $G_2$. Moreover, a force which occurs in timestep $i\geq 2$ in $G_1\cup G_2$ occurs in a timestep $j\leq i$ in $G_1$ or $G_2$. Therefore, $\ppt(G_1; S_1) \leq \ppt(G_1 \cup G_2; S)$, and $\ppt(G_2; S_2) \leq \ppt(G_1 \cup G_2; S)$. 
In Case 3, since no vertex of $K$ is in $S$, no vertex of $K$ colors another vertex of $G_1\cup G_2$ in the domination step. Thus, in $G_1\cup G_2$, no vertex  in $V(G_2)\backslash K$ can force a vertex of $K$, since this would mean a vertex in $K$ forced some vertex in $V(G_2)\backslash K$ in a previous timestep, which would require all vertices of $K$ to already be colored. Moreover, in $G_1\cup G_2$, all vertices in $V(G_2)\backslash K$ can be forced after the vertices in $K$ get colored. Thus, $S_1$ is a power dominating set of $G_1$ and $S_2$ is a power dominating set of $G_2$. Furthermore, since $S_1$ and $S_2$ can color $G_1$ and $G_2$ using a subset of the forces that are used by $S$ to color $G_1\cup G_2$, it follows that $\ppt(G_1; S_1) \leq \ppt(G_1 \cup G_2; S)$ and $\ppt(G_2; S_2) \leq \ppt(G_1 \cup G_2; S)$. Case 4 is symmetric to Case 3. 
Thus, in all cases, $\zpth(G_1)\leq |S_1| + \ppt(G_1; S_1) \leq |S| + \ppt(G_1 \cup G_2; S)=\zpth(G_1\cup G_2)$ and $\zpth(G_2)\leq |S_2| + \ppt(G_2; S_2) \leq |S| + \ppt(G_1 \cup G_2; S)=\zpth(G_1\cup G_2)$, so $\max\{\zpth(G_1),\zpth(G_2)\}\leq \zpth(G_1 \cup G_2)$.

To see that the upper bound is tight, let $K$ be a complete graph with vertex set $\{v_1,\ldots,v_k\}$, let $G_1$ be the graph obtained by appending two leaves, $u_i$ and $w_i$, to each vertex $v_i$ of $K$, $1\leq i\leq k$, and then appending three paths of length 1 to each $w_i$, $1\leq i\leq k$. Let $G_2$ be a copy of $G_1$ labeled so that $G_1\cap G_2=K$ and the vertex in $G_2$ corresponding to $w_i$ in $G_1$ is $w_i'$, $1\leq i\leq k$; see Figure \ref{fig:th-upperbounds} for an illustration. Let $S = \{w_1,\ldots,w_k\}$. Since every minimum power dominating set of $G_1$ must contain $S$, and $S$ is itself a power dominating set of $G_1$, $\gamma_P(G_1) = \gamma_P(G_2) = |S| = k$. Furthermore, $\max\{\ppt(G_1),\ppt(G_2)\}=2$, so $\gamma_P(G_1) + \gamma_P(G_2) + k + \max\{\ppt(G_1),\ppt(G_2)\}  = 3k + 2$. In $G_1 \cup G_2$, for $1\leq i\leq k$, $v_i$ has two leaves appended to it; thus, either $v_i$ or one of these two leaves must be contained in any power dominating set of $G_1 \cup G_2$. Likewise, since each vertex $w_i$ has three paths appended to it, either $w_i$ or at least one vertex in those paths must be contained in any power dominating set. Similarly, either $w_i'$ or at least one vertex in the paths appended to $w_i'$ must be contained in any power dominating set.
Thus, $\gamma_P(G_1 \cup G_2) \geq 3k$. If $\zpth(G_1 \cup G_2) \leq 3k+1$, then there must exist a power dominating set $S'$ such that $\ppt(G_1 \cup G_2;S') = 1$, and $|S'|=3k$. However, if $\ppt(G_1 \cup G_2;S') = 1$, then $S'$ must be a dominating set, and it is easy to see that $G_1\cup G_2$ does not have a dominating set of size $3k$. Therefore $\zpth(G_1 \cup G_2) = 3k+2=\gamma_P(G_1) + \gamma_P(G_2) + k + \max\{\ppt(G_1),\ppt(G_2)\}$.

\begin{figure}[H]
\centering
\includegraphics[scale=.4]{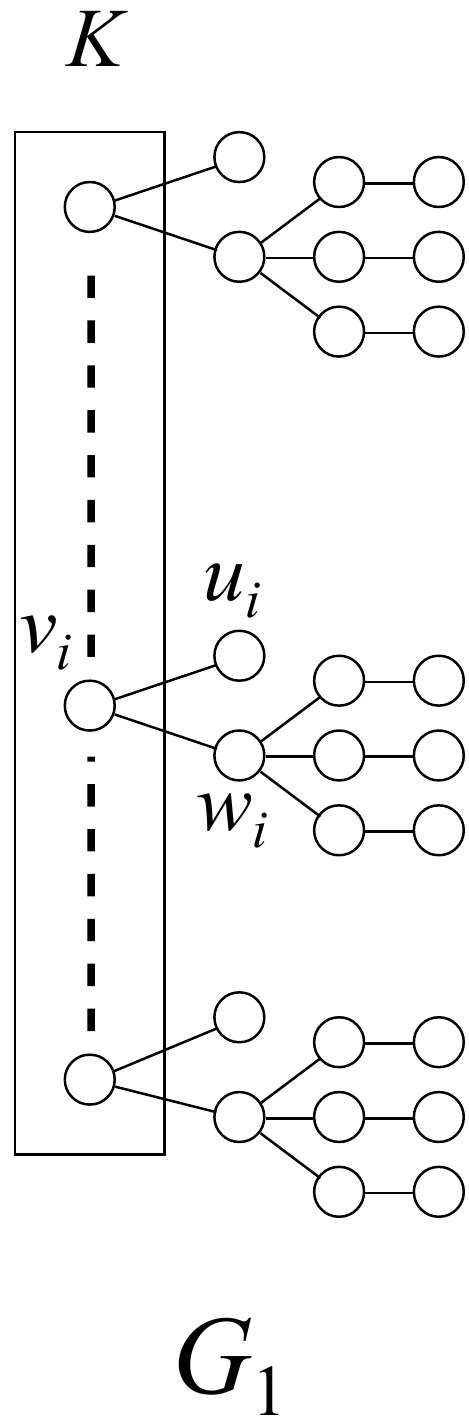} \hspace{2cm} \includegraphics[scale=.4]{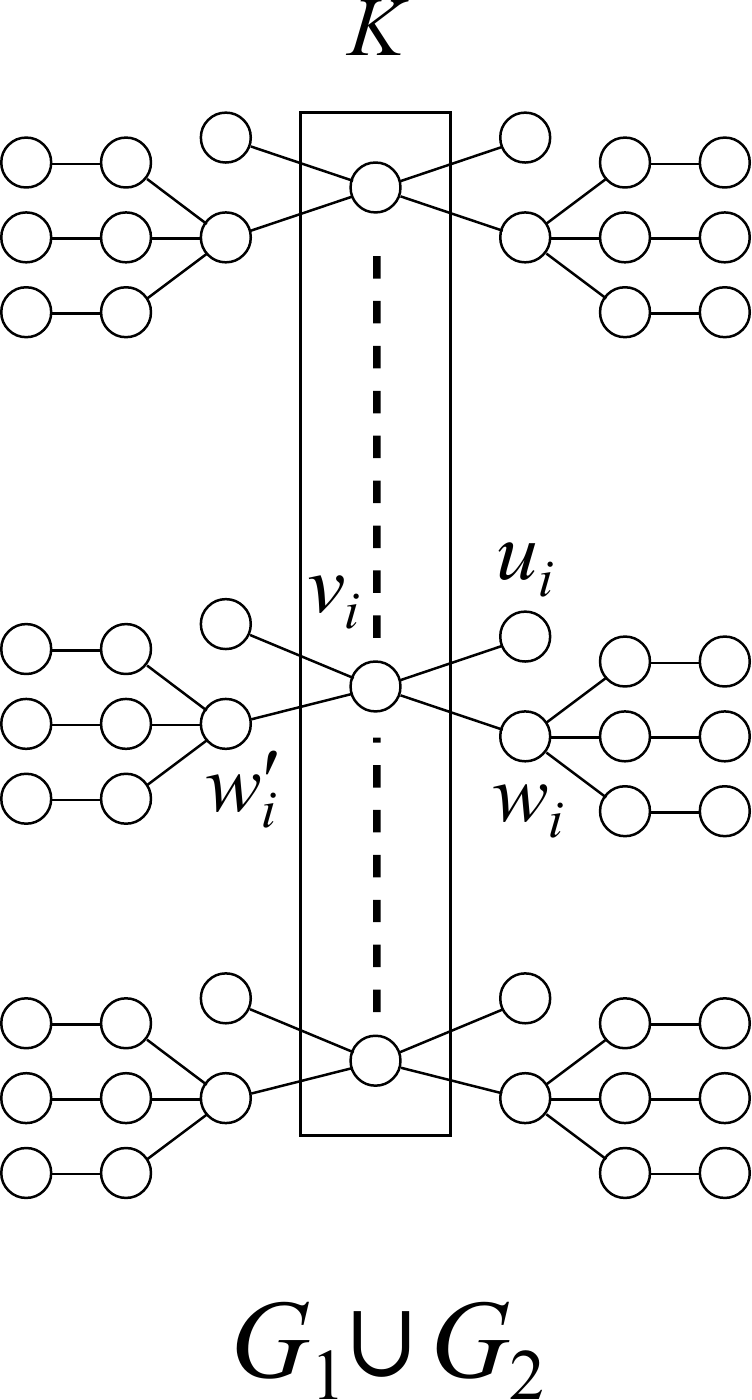}
\caption{Graphs $G_1$ and $G_1 \cup G_2$ for which the upper bound in Theorem \ref{thm:th-ub} holds with equality.}
\label{fig:th-upperbounds}
\end{figure}

To see that the lower bound is tight, let $K$ be a complete graph on $k$ vertices, let $G_1$ be the graph obtained by appending three leaves to each vertex of $K$, and let $G_2$ be a copy of $G_1$ labeled so that $G_1\cap G_2=K$. Then, $V(K)$ is a power throttling set of $G_1$, $G_2$ and $G_1 \cup G_2$, since $V(K)$ is a minimum power dominating set in all three graphs, and the power propagation time in all three graphs using $V(K)$ is 1. Thus, $\zpth(G_1 \cup G_2)=k+1=\max\{\zpth(G_1),\zpth(G_2)\}$.
\qed
\vspace{9pt}

\noindent We conclude this section by deriving tight bounds on the power domination throttling numbers of trees; some ideas in the following results are adapted from \cite{carlson2}.

\begin{lemma}\label{lemma_no_leaves}
Let $G$ be a connected graph on at least $3$ vertices. Then there exists a power throttling set of $G$ that contains no leaves.
\end{lemma}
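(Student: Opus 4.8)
The plan is to start from a power throttling set $S$ chosen to have the fewest leaves among all power throttling sets of $G$, and to show that if $S$ still contained a leaf we could produce either a power dominating set with strictly smaller throttling value or a power throttling set with strictly fewer leaves, in each case a contradiction. The engine behind the argument is a monotonicity property of the power domination coloring process: a larger set of vertices colored after the domination step forces completion in no more timesteps.

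First I would record the monotonicity lemma. For a set of colored vertices $C$, let $\calf(C)$ denote the set obtained after one forcing step, i.e., $C$ together with every vertex that is the unique non-colored neighbor of some vertex of $C$. The key claim is that $C_1\subseteq C_2$ implies $\calf(C_1)\subseteq\calf(C_2)$: a vertex $v$ forced out of $C_1$ by some colored vertex $w$ is either already in $C_2$, or else the non-colored neighbors of $w$ relative to $C_2$ form a subset of those relative to $C_1$, which is exactly $\{v\}$, so $v$ is forced again. Iterating gives $\calf^t(C_1)\subseteq\calf^t(C_2)$ for all $t$. Hence if $N[S]\subseteq N[S']$, then $S'$ colors $G$ in at most as many timesteps as $S$; in particular $S'$ is a power dominating set whenever $S$ is, and $\ppt(G;S')\le\ppt(G;S)$.

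Next I would set up the swap. Let $\ell\in S$ be a leaf of $G$ with unique neighbor $u$. Since $G$ is connected and $n(G)\ge3$, the vertex $u$ is not itself a leaf (otherwise $\{\ell,u\}$ would be a component of order $2$). Because $N[\ell]=\{\ell,u\}\subseteq N[u]$, I would split into two cases. If $u\in S$, then $N[S\setminus\{\ell\}]=N[S]$, so $S\setminus\{\ell\}$ is a power dominating set with the same propagation time but strictly smaller cardinality, contradicting $|S|+\ppt(G;S)=\zpth(G)$. If $u\notin S$, set $S'=(S\setminus\{\ell\})\cup\{u\}$; then $|S'|=|S|$ and $N[S']\supseteq N[S]$, so by the monotonicity lemma $S'$ is also a power throttling set. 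But $S'$ is obtained from $S$ by deleting the leaf $\ell$ and inserting the non-leaf $u$, so $S'$ has strictly fewer leaves than $S$, contradicting the choice of $S$.

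Both cases yield a contradiction, so a leaf-minimal power throttling set contains no leaves, which proves the lemma. I expect the main obstacle to be stating and proving the monotonicity lemma cleanly, in particular handling the unique-non-colored-neighbor condition carefully under the containment $C_1\subseteq C_2$; the swap and the case analysis are then routine, as is the observation that the neighbor of a leaf cannot itself be a leaf when $G$ is connected with $n(G)\ge3$.
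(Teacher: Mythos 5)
Your proof is correct and takes essentially the same approach as the paper: the same leaf-for-neighbor swap in the same two cases ($u\in S$ and $u\notin S$), justified by $N[\ell]\subseteq N[u]$ together with monotonicity of the coloring process under enlarging the set colored after the domination step. The only cosmetic differences are that you organize the argument as an extremal choice (fewest leaves) plus contradiction rather than iterated replacement, and that you prove the monotonicity fact explicitly where the paper invokes it implicitly via $\zpt(G;N[S])\le\zpt(G;N[S'])$.
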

\proof
Let $S'$ be a power throttling set of $G$, and suppose that $v\in S'$ is a leaf with neighbor $u$ (which cannot be a leaf since $G$ is connected and $n(G)\geq 3$). If $u\in S'$, then $S\vcentcolon=S'\setminus\{v\}$ is also a power throttling set of $G$, since $|S|=|S'|-1$ and $\ppt(G;S)\le\ppt(G;S')+1$. Otherwise, if $u\notin S'$, then let $S=(S'\setminus\{v\})\cup\{u\}$. Note that $N[S']\subset N[S]$, and so $\zpt(G;N[S])\le\zpt(G;N[S'])$. Since $\ppt(G;S),\ppt(G;S')\ge1$, this implies that $\ppt(G;S)\le\ppt(G;S')$. Since $|S|=|S'|$, $S$ must also achieve throttling. This process of replacing leaves with non-leaf vertices in power throttling sets of $G$ can be repeated until a power throttling set is obtained which has no leaves.

\qed

\begin{proposition}\label{prop_subtree_monotone}
	If $T$ is a tree with subtree $T'$, then $\zpth(T')\le\zpth(T)$. That is, power domination throttling is subtree monotone for trees.
\end{proposition}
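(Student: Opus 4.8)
The plan is to reduce the statement to the case where $T'$ is obtained from $T$ by deleting a single leaf, and then to transfer a power throttling set of $T$ to one of $T'$ without increasing the throttling cost. First I would observe that if $V(T')\subsetneq V(T)$, then choosing a vertex $x\in V(T)\setminus V(T')$ at maximum distance from $V(T')$ forces $x$ to be a leaf of $T$: any neighbor of $x$ other than the one on the $x$-to-$T'$ path would lie strictly farther from $V(T')$, contradicting maximality. Deleting $x$ yields a tree $T''$ with $T'\le T''\le T$ and $|V(T'')|=|V(T)|-1$, so by induction on $|V(T)|-|V(T')|$ (with the trivial base case $T'=T$) it suffices to prove $\zpth(T-v)\le\zpth(T)$ whenever $v$ is a leaf of $T$.

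So fix a leaf $v$ of $T$ with unique neighbor $u$, write $T'=T-v$, and let $S$ be a power throttling set of $T$. Mirroring the leaf-replacement argument of Lemma~\ref{lemma_no_leaves}, I would build $S'\subseteq V(T')$ as follows: if $v\notin S$ take $S'=S$; if $v\in S$ and $u\in S$ take $S'=S\setminus\{v\}$; and if $v\in S$ and $u\notin S$ take $S'=(S\setminus\{v\})\cup\{u\}$. In every case $|S'|\le|S|$, and a short check of closed neighborhoods gives $N_{T'}[S']\supseteq N_T[S]\cap V(T')$, i.e. after the domination step the colored vertices of $T'$ already contain all colored vertices of $T$ that survive in $T'$.

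The core of the argument, and the step I expect to be the main obstacle, is comparing the forcing processes across the two different graphs $T$ and $T'$. Letting $D_i$ and $D_i'$ denote the colored sets after timestep $i$ in $T$ (from $S$) and in $T'$ (from $S'$) respectively, I would prove by induction on $i$ that $D_i'\supseteq D_i\cap V(T')$. The inductive step is routine for any force performed by a vertex $z\ne v$, since removing the leaf $v$ only shrinks the set of uncolored neighbors a vertex must wait on, so a force valid in $T$ remains valid (or becomes unnecessary) in $T'$. The one delicate case is a force of $u$ performed by $v$ itself in $T$: here $v$ must be colored while $u$ is not, which forces $v\in N_T[S]$ and hence $v\in S$ or $u\in S$; but whenever this happens the construction of $S'$ was arranged so that $u\in N_{T'}[S']\subseteq D_1'$, so $u$ is already colored in $T'$ and no replacement force is needed. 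Granting the induction, taking $i=\ppt(T;S)$ gives $V(T')\subseteq D_i'$, so $T'$ is colored by timestep $i$ and $\ppt(T';S')\le\ppt(T;S)$. Combining with $|S'|\le|S|$ yields $\zpth(T')\le|S'|+\ppt(T';S')\le|S|+\ppt(T;S)=\zpth(T)$, as desired. Beyond this cross-graph force comparison, the only other things to watch are the degenerate small trees (e.g. $n(T')\le 2$) and the bookkeeping of the single domination step, both of which are absorbed by the inequalities $|S'|\le|S|$ and $\ppt(T';S')\le\ppt(T;S)$.
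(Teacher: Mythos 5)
Your proof is correct and follows essentially the same route as the paper: reduce to the deletion of a single leaf (iterating over leaf removals to reach any subtree) and transfer a throttling set across the deletion. The only difference is organizational --- the paper first invokes Lemma~\ref{lemma_no_leaves} to obtain a throttling set containing no leaves, so that the set survives the deletion verbatim, whereas you perform the same leaf-replacement locally for the one deleted leaf and spell out the timestep-by-timestep propagation comparison that the paper asserts without detail.
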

\proof
Clearly the claim is true for trees with at most $2$ vertices, so suppose that $T$ is a tree with at least $3$ vertices. 
By Lemma \ref{lemma_no_leaves}, $T$ has a power throttling set $S$ which does not contain leaves. Let $v$ be a leaf of $T$; then, $S\subset V(T-v)$, so $\ppt(T-v;S)\le\ppt(T;S)$. Thus, $\zpth(T-v)\leq |S|+\ppt(T-v;S)\leq |S|+\ppt(T;S)=\zpth(T)$. Since any subtree $T'$ of $T$ can be attained by repeated removal of leaves, and since each removal of a leaf does not increase the power domination throttling number, it follows that $\zpth(T')\le\zpth(T)$.
\qed

\begin{theorem}\label{cor_tree_diam_bound}
	Let $T$ be a tree on at least $3$ vertices. Then, \[\big\lceil\sqrt{2(diam(T)+1)}-1/2\big\rceil\le\zpth(G)\le diam(T)-1+\gamma_P(T),\] and these bounds are tight.
\end{theorem}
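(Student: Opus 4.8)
The plan is to treat the two bounds by different means: the lower bound follows immediately from subtree monotonicity, while the upper bound requires exhibiting one good power dominating set and controlling its propagation time. Throughout write $D=diam(T)$.

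For the lower bound I would observe that a diametral path of $T$ is a subtree isomorphic to $P_{D+1}$. By Proposition~\ref{prop_subtree_monotone}, $\zpth(P_{D+1})\le\zpth(T)$, and by Proposition~\ref{pth_path}, $\zpth(P_{D+1})=\big\lceil\sqrt{2(D+1)}-1/2\big\rceil$. Chaining these gives $\big\lceil\sqrt{2(diam(T)+1)}-1/2\big\rceil\le\zpth(T)$, which is the entire lower-bound argument.

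For the upper bound it suffices to produce a single power dominating set $S$ with $|S|+\ppt(T;S)\le D-1+\gamma_P(T)$. I would take $S$ to be a minimum power dominating set, so $|S|=\gamma_P(T)$, and reduce the problem to showing $S$ can be chosen with $\ppt(T;S)\le D-1$. First, by the replacement argument of Lemma~\ref{lemma_no_leaves} (swapping a leaf of $S$ for its unique neighbor keeps $S$ a power dominating set of the same size and only enlarges $N[S]$, so never increases propagation time), I may assume $S$ is leaf-free; in particular every vertex of $S$ has degree at least $2$. The clean case is $\gamma_P(T)=1$: if $\{s\}$ is a power dominating set then every neighbor of $s$ can have at most one child (otherwise forcing stalls in a subtree reachable only through that neighbor), and inductively each subtree hanging off $s$ is a path, so $T$ is a spider with center $s$ and legs of lengths $\ell_1\ge\ell_2\ge\cdots$. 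Since $s$ is not a leaf it has at least two legs, so $D=\ell_1+\ell_2\ge\ell_1+1$; forcing runs down all legs in parallel, giving $\ppt(T;\{s\})=\ell_1\le D-1$ (this already subsumes paths, where $s$ is an interior vertex with two legs). For general $T$ I would induct on $\gamma_P(T)$ via the standard leg-by-leg decomposition of trees used for power domination, peeling off a pendant branch that forces a distinct power-dominating vertex and placing that vertex deep enough to color its branch in parallel with the advance of the main forcing front; the leaf-freeness of $S$ supplies the extra unit that turns the governing diameter estimate from $D$ into $D-1$, since a forcing chain in a tree is a path and extends to a path of the tree whose length is bounded by $D$.

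The main obstacle is precisely that forcing in a tree need not advance one vertex per timestep: a vertex on the critical chain may be delayed because it has a second uncolored neighbor sitting in a branch that carries its own power-dominating vertex and must be colored from within before the chain can proceed. The crux is therefore to show such delays are absorbed by the diameter budget, i.e. that each delay corresponds to forcing work inside a branch whose depth is already counted in $D$, so that the time at which the last vertex is colored still does not exceed $D-1$; this is exactly what the inductive placement of branch sources is designed to guarantee. Finally, for tightness I would certify the lower bound with paths (where $T=P_{D+1}$ makes the monotonicity step an equality) and the upper bound with stars $K_{1,m}$, for which $D=2$, $\gamma_P=1$, and $\zpth=2$ (and small spiders of diameter $3$, such as a path $a_2a_1c$ with two extra leaves at $c$, where $\zpth=3=D-1+\gamma_P$), remarking that equality is a small-diameter phenomenon because on long legs one can split the power dominating set and drive propagation well below $D-1$.
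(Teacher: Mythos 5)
Your lower-bound argument is correct and coincides with the paper's: the paper also takes a diametral path $P_{diam(T)+1}$ as a subtree and combines subtree monotonicity (Proposition \ref{prop_subtree_monotone}) with the throttling value of paths (the paper invokes the general bound of Theorem \ref{pth_bound} where you invoke the exact value from Proposition \ref{pth_path}, but the resulting quantity $\big\lceil\sqrt{2(diam(T)+1)}-1/2\big\rceil$ is the same). Your tightness certificates also work: paths make the lower bound an equality, and stars (where the paper checks $\lceil\sqrt{6}-1/2\rceil=2=2-1+1$) make both bounds equalities.

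The genuine gap is in the upper bound. Your reduction is exactly the paper's: $\zpth(T)\le\gamma_P(T)+\ppt(T)$, so it suffices to know $\ppt(T)\le diam(T)-1$ for every tree on at least $3$ vertices. But the paper does not prove this inequality --- it cites it as Theorem 2.5 of \cite{ferrero} --- whereas you attempt to prove it from scratch by induction on $\gamma_P(T)$ and do not complete the induction. Your base case $\gamma_P(T)=1$ is correct (a leaf-free solo power dominator forces $T$ to be a spider centered at it, and then $\ppt=\ell_1=D-\ell_2\le D-1$), but the inductive step is only a plan: ``peeling off a pendant branch\ldots placing that vertex deep enough to color its branch in parallel with the advance of the main forcing front'' is not an argument, and you yourself name the unresolved crux --- a vertex on the critical forcing chain can stall because its second uncolored neighbor lies in a branch that must be colored from within, and one must show every such delay is absorbed by the diameter budget. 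Nothing in the proposal establishes this; saying ``this is exactly what the inductive placement of branch sources is designed to guarantee'' is circular. As written, your upper bound rests on an unproven claim that is itself a published theorem, so the proposal is incomplete unless you either carry out that induction in full or simply cite \cite{ferrero} as the paper does.
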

\proof
Since $T$ has diameter $d:=diam(T)$ and at least 3 vertices, $T$ contains a path of length $d\geq 2$. Thus $P_{d+1}$ is a subtree of $T$, and $\Delta(P_{d+1})=2$. Then, the lower bound follows from Theorem~\ref{pth_bound} and Proposition \ref{prop_subtree_monotone}. In Theorem 2.5 of \cite{ferrero}, it is shown that for every tree with at least 3 vertices, $\ppt(T)\le d-1$. Let $S^*$ be a power throttling set of $T$ and $S$ be a minimum power dominating set of $T$ such that $\ppt(T;S)=\ppt(T)$. Then, $\zpth(T)=|S^*|+\ppt(T;S^*)\leq |S|+\ppt(T;S)=\gamma_P(T)+\ppt(T)\leq \gamma_P(T)+d-1$. Both bounds are tight, e.g., for stars, since $\big\lceil\sqrt{2(2+1)}-1/2\big\rceil= 2-1+1$.
\qed
\vspace{9pt}

\section{Extremal power domination throttling numbers}

In this section, we give a characterization of graphs whose power domination throttling number is at least $n-1$ or at most $t$, for any constant $t$.
We begin by showing that graphs with $\zpth(G)\leq t$ are minors of the graph in the following definition.

\begin{definition}
\label{def_gsab}
Let $a\geq 0$, $b\geq 0$, and $s\geq 1$ be integers and let $\pd(s,a,b)$ be the graph obtained from $K_s \dot{\cup} (K_a \square P_b)$ by adding every possible edge between the disjoint copy of $K_s$ and a copy of $K_a$ in $K_a \square P_b$ whose vertices have minimum degree. If either $a=0$ or $b=0$, then $\pd(s,a,b)\simeq K_s$.
A \emph{path edge} of $\pd(s,a,b)$ is an edge that belongs to one of the copies of $P_b$; a \emph{complete edge} is an edge that belongs to one of the copies of $K_a$, or to $K_s$; a \emph{cross edge} is an edge between $K_s$ and $K_a \square P_b$. The vertices in $K_s$ and $K_a$ that are incident to cross edges are called \emph{$s$-vertices} and \emph{$a$-vertices}, respectively. See Figure \ref{PDgraph} for an illustration.
\end{definition}

\begin{figure}[H] \begin{center}
\scalebox{.75}{\includegraphics{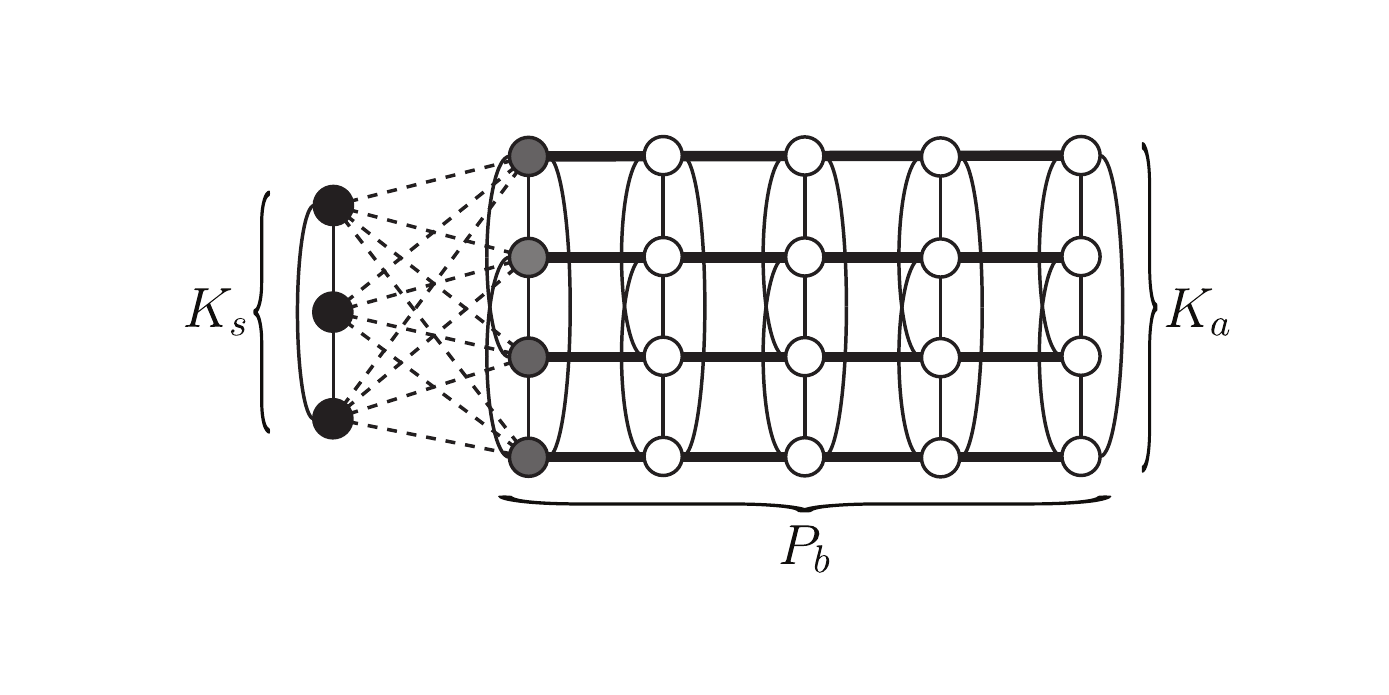}}\\
\caption{The graph $\pd(s,a,b)$ with $s=3$, $a = 4$, and $b = 5$. The dashed edges are the cross edges, the solid edges are the complete edges, the thick edges are the path edges, the black vertices are $s$-vertices and the grey vertices are $a$-vertices.}\label{PDgraph}
\end{center}
\end{figure}

\begin{theorem}\label{pdChar}
Let $G$ be a graph and $t$ be a positive integer. Then, $\zpth(G) \leq t$ if and only if there exist integers $a\geq 0$, $b\geq 0$, and $s\geq 1$ such that $s + b = t$, and 
$G$ can be obtained from $\pd(s, a, b)$ by
\begin{enumerate}
\item contracting path edges, 
\item deleting complete edges, and/or
\item deleting cross edges so that the remaining cross edges saturate the $a$-vertices. 
\end{enumerate}
Moreover, for a fixed $t$, these conditions can be verified in polynomial time.
\end{theorem}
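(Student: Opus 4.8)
# Proof Proposal for Theorem \ref{pdChar}

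The plan is to establish a structural correspondence between graphs with $\zpth(G) \le t$ and minors of $\pd(s,a,b)$ obtained by the three prescribed operations, proving the two directions of the equivalence separately and then addressing the complexity claim.

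For the \emph{backward direction}, I would suppose $G$ is obtained from $\pd(s,a,b)$ with $s+b=t$ by contracting path edges, deleting complete edges, and deleting cross edges so the remaining ones saturate the $a$-vertices. The key is to exhibit an explicit power dominating set $S$ of size $s$ whose power propagation time is at most $b$, giving $\zpth(G) \le s + b = t$. The natural candidate is to take $S$ to be (the images of) the $s$-vertices of $K_s$. Because the cross edges saturate the $a$-vertices, the domination step from the $s$-vertices colors all of $K_s$ together with the first copy of $K_a$ in $K_a \square P_b$; thereafter, each colored copy of $K_a$ forces the next copy along the path edges in one timestep apiece, so the $b$ copies are colored within $b$ forcing steps. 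I would argue that contracting path edges only shortens propagation (reducing the effective value of $b$), that deleting complete edges does not impede the domination step or the along-the-path forces, and that the saturation condition is exactly what guarantees every $a$-vertex is dominated at the first step. The main care here is verifying that the forcing along $K_a \square P_b$ still succeeds after edge deletions, since the ``only non-colored neighbor'' rule requires enough structure to push color forward; I expect deletion of complete edges to be harmless precisely because a single colored vertex in one copy of $K_a$ can still force across the unique path edge to its counterpart.

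For the \emph{forward direction}, I would suppose $\zpth(G) \le t$ and let $S$ be a power throttling set, so $|S| + \ppt(G;S) \le t$. Set $s = |S|$ and $b = \ppt(G;S)$, so $s + b \le t$; I can pad $b$ up to make $s + b = t$ exactly, since increasing $b$ only adds unused path copies. The strategy is to read off from the coloring process a surjection onto $G$ from a suitable $\pd(s,a,b)$: the set $S$ corresponds to the $s$-vertices of $K_s$, and the vertices colored in each successive timestep are grouped to correspond to successive copies of $K_a$ along the path, where $a$ is taken large enough to accommodate the widest timestep layer. The closed neighborhood $N[S]$ dominated in the first step corresponds to $K_s$ together with the first $K_a$ (joined by cross edges), and each subsequent forcing step, in which colored vertices force their unique non-colored neighbors, corresponds to advancing one copy along the path. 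Recovering $G$ as a minor then amounts to: contracting path edges to merge the extra ``room'' provided by the generously-sized copies of $K_a$, deleting the complete and cross edges not realized in $G$, and checking that the cross edges actually used to dominate still saturate the $a$-vertices.

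The hard part will be this forward direction, specifically the bookkeeping that turns an arbitrary throttling process into the rigid product structure of $\pd(s,a,b)$ --- one must choose $a$ and the assignment of vertices-to-copies so that every edge of $G$ is the image of an allowed edge of $\pd(s,a,b)$ under the minor operations, and no forbidden edge (e.g.\ an edge skipping between non-consecutive path layers) is required. I would handle this by fixing a chronological list of forces, using it to assign each vertex to the timestep at which it is colored, and verifying that all edges of $G$ connect vertices in the same or adjacent layers (so they are path, complete, or cross edges) --- edges within a layer becoming complete edges and forcing edges between consecutive layers becoming path edges after suitable contraction. Finally, for the complexity claim, since $t$ is fixed there are only $O(t^2)$ admissible pairs $(s,b)$ with $s+b=t$ (and $a$ is bounded by $n$), so one can enumerate the candidate graphs $\pd(s,a,b)$ and test the minor relation under the restricted operations in polynomial time; the restriction to the three specified operations keeps this tractable despite minor-testing being hard in general.
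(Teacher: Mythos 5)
Your backward direction and your complexity remark aside, the core of the argument --- the forward direction --- rests on a structural claim that is false, and this is exactly the step you flagged as the hard part. You propose to assign each vertex of $G$ to a single ``layer'' (the copy of $K_a$ indexed by the timestep at which it is colored) and then to verify that every edge of $G$ joins vertices in the same or adjacent layers. This verification cannot succeed. Consider $G$ with vertices $w_1,\dots,w_5,u$, the path $w_1w_2w_3w_4w_5$, and the extra edges $uw_1$, $uw_4$, $uw_5$, with $S=\{w_1\}$: the domination step colors $w_1,w_2,u$; then $w_2$ forces $w_3$ (timestep $2$), $w_3$ forces $w_4$ (timestep $3$), and $w_5$ is forced at timestep $4$, while $u$ sits idle the whole time because it has two non-colored neighbors. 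The edge $uw_4$ joins layers $1$ and $3$, and $uw_5$ joins layers $1$ and $4$, no matter which chronological list of forces is fixed. Moreover, even edges between \emph{consecutive} layers break your scheme when they are not forcing edges: the only edges of $\pd(s,a,b)$ between consecutive cliques are path edges, which join a vertex to its successor in a forcing chain, so a non-forcing edge between layers $\ell$ and $\ell+1$ has no preimage under a one-vertex-per-layer assignment. Nothing about throttling optimality rules out either phenomenon, and you give no argument that it would.

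The missing idea --- and the heart of the paper's proof --- is that a vertex must occupy an \emph{interval} of consecutive cliques, not a single one. The paper decomposes $G-S$ into forcing chains (each vertex is forced by at most one vertex and forces at most one vertex), and stretches each vertex $v_{i,j}$ over $\tau_{i,j}$ consecutive path positions, from the timestep at which it is colored up to (one before) the timestep at which it performs a force. The key supporting lemma is that if $\{x,y\}$ is an edge of $G-S$ with $x,y$ in different chains, then $x$ cannot perform a force at or before the timestep at which $y$ becomes colored (otherwise $x$ would have two non-colored neighbors, or would have to force $y$ into its own chain); hence the occupation intervals of $x$ and $y$ overlap, and the edge $\{x,y\}$ can be realized as a \emph{complete} edge in the clique indexed by the first timestep at which both are colored. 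Contracting each vertex's interval of path edges then recovers $G$ exactly --- in the example above, $u$ becomes a four-vertex subpath $u^1u^2u^3u^4$, the edges $uw_4$ and $uw_5$ become complete edges in $K_3$ and $K_4$, and the contraction merges the copies back into $u$. Without this interval/contraction bookkeeping your map from $G$ into $\pd(s,a,b)$ is not well defined. Separately, your complexity argument (enumerate the graphs $\pd(s,a,b)$ and ``test the minor relation under the restricted operations in polynomial time'') is asserted, not proved; the paper instead uses the equivalence it has just established and checks $\zpth(G)\le t$ directly by enumerating the $O(n^t)$ candidate sets $S$ of size at most $t$ and computing $\ppt(G;S)$ for each, giving an $O(n^{t+2})$ procedure.
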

\proof
Suppose first that $\zpth(G) \leq t$. Let $S$ be a power throttling set of $G$, and fix some chronological list of forces by which $N[S]$ colors $G$. 
Let $s = |S|$, let $b' = \ppt(G; S) = \zpth(G) -s$, and let $b = t-s$; note that $b' \leq b$. Let $A = N[S] \setminus S=\{v_{1,1}, v_{2,1},\ldots,v_{a,1}\}$, where $a=|A|$. Clearly, $a \leq s\Delta(G)$. We will show that $G$ can be obtained from $\pd(s,a,b)$ by contracting path edges, deleting complete edges, and/or deleting cross edges so that the remaining cross edges saturate the $a$-vertices. First, note that $\pd(s,a,b')$ can be obtained from $\pd(s,a,b)$ by contracting path edges. Thus, it suffices to show that $G$ can be obtained from $\pd(s,a,b')$ by the above operations.


Label the $s$-vertices of $\pd(s,a,b')$ with the elements of $S$, and label the $a$-vertices of $\pd(s,a,b')$ with the elements of $\{v_{1,1}^1, v_{2,1}^1,\ldots,v_{a,1}^1\}$. For each $s$-vertex $u$ and $a$-vertex $v_{i,1}^1$, delete the edge $uv_{i,1}^1$ unless $uv_{i,1} \in E(G)$. Note that all edges deleted this way are cross edges, and that after these deletions, the remaining cross edges must saturate the $a$-vertices, since by definition the vertices in $S$ dominate the vertices in $A$. Also, for each pair of $s$-vertices $u_1$, $u_2$, delete the edge $u_1u_2$ unless $u_1u_2\in E(G)$; note that all edges deleted this way are complete edges.

For $1\leq i\leq a$, let $v_{i,1},\ldots,v_{i,{p_i}}$ be a maximal sequence of vertices of $G$ such that $v_{i,j}$ forces $v_{i,j+1}$ for $1 \leq j < p_i$ (after the domination step using $S$). Note that since $A=N[S]\backslash S$, $A$ is a zero forcing set of $G-S$, and hence each vertex of $G-S$ belongs to exactly one such sequence. 
For $1 \leq i \leq a$ and $1 \leq j \leq p_i$, if $v_{i,j}$ performs a force, let $\tau_{i,j}$ be the timestep at which $v_{i,j}$ performs a force minus the timestep at which $v_{i,j}$ gets colored;  if $v_{i,j}$ does not perform a force, let $\tau_{i,j}$ be $b'+1$ minus the timestep at which $v_{i,j}$ gets colored. Note that for each $i\in \{1,\ldots,a\}$, $\sum_{j=1}^{p_i}\tau_{i,j}=b'$. Thus, if $P_1,\ldots,P_a$ are the paths used in the construction of $\pd(s,a,b')$, we can label the vertices of $P_i$, $1\leq i\leq a$, in order starting from the endpoint which is an $a$-vertex toward the other endpoint, as 
\[v_{i,1}^1,\ldots,v_{i,1}^{\tau_{i,1}},v_{i,2}^1,\ldots,v_{i,2}^{\tau_{i,2}},v_{i,3}^1,\ldots,v_{i,3}^{\tau_{i,3}},\ldots,v_{i,p_i}^1,\ldots,v_{i,p_i}^{\tau_{i,p_i}}.\] 

Let $K_1,\ldots,K_{b'}$ be the cliques of size $a$ used in the construction of $\pd(s,a,b')$, where $V(K_1)=\{v_{1,1}^1,\ldots,v_{a,1}^1\}$, and the vertices of $K_{\ell}$ are adjacent to the vertices of $K_{\ell+1}$ for $1\leq \ell<b'$. Thus, each such clique corresponds to a timestep in the forcing process of $G-S$ using $A$. Let $e=\{v_{i_1,j_1},v_{i_2,j_2}\}$ be an arbitrary edge of $G-S$ with $i_1 \neq i_2$. There is an earliest timestep $\ell^*$ at which both $v_{i_1,j_1}$ and $v_{i_2,j_2}$ are colored. Therefore, the clique $K_{\ell^*}$ contains $v_{i_1,j_1}^{\alpha}$ and $v_{i_2,j_2}^{\beta}$, for some $\alpha\in \{1,\ldots,\tau_{i_1,j_1}\}$ and $\beta\in \{1,\ldots,\tau_{i_2,j_2}\}$. Denote the edge $\{v_{i_1,j_1}^{\alpha}$,$v_{i_2,j_2}^{\beta}\}$ by $\phi(e)$, and note that $\phi(e)$ is uniquely determined for  $e$.

Delete all edges in $K_1,\ldots,K_{b'}$ from $\pd(s,a,b')$ except the edges $\{\phi(e):e=\{v_{i_1,j_1},v_{i_2,j_2}\}\in E(G-S),  \text{ with }i_1 \neq i_2\}$. Next, for $1 \leq i \leq a$ and $1 \leq j \leq p_i$, contract the edges $\{v_{i,j}^1,v_{i,j}^2\},\{v_{i,j}^2,v_{i,j}^3\},\ldots,\{v_{i,j}^{\tau_{i,j}-1},v_{i,j}^{\tau_{i,j}}\}$ in $\pd(s,a,b')$ and let $\psi(v_{i,j})$ be the vertex corresponding to $\{v_{i,j}^1,\ldots,v_{i,j}^{\tau_{i,j}}\}$ obtained from the contraction of these edges. See Figure \ref{PDextExample} for an illustration. Note that these operations delete complete edges and contract path edges. Moreover, note that there is a bijection between edges of $G-S$ of the form $e=\{v_{i_1,j_1},v_{i_2,j_2}\}$ with $i_1 \neq i_2$ and the edges $\phi(e)$ of $\pd(s,a,b')$, as well as between edges of the form $\{v_{i,j},v_{i,{j+1}}\}$ of $G-S$ and the edges $\{\psi(v_{i,j}),\psi(v_{i,{j+1}})\}$ of $\pd(s,a,b')$. Thus, the obtained graph is isomorphic to $G$, so $G$ can be obtained from $\pd(s,a,b')$ by contracting path edges, deleting complete edges, and/or deleting cross edges so that the remaining cross edges saturate the $a$-vertices.

\begin{figure}[H] 
\begin{center}
\includegraphics[scale=.5]{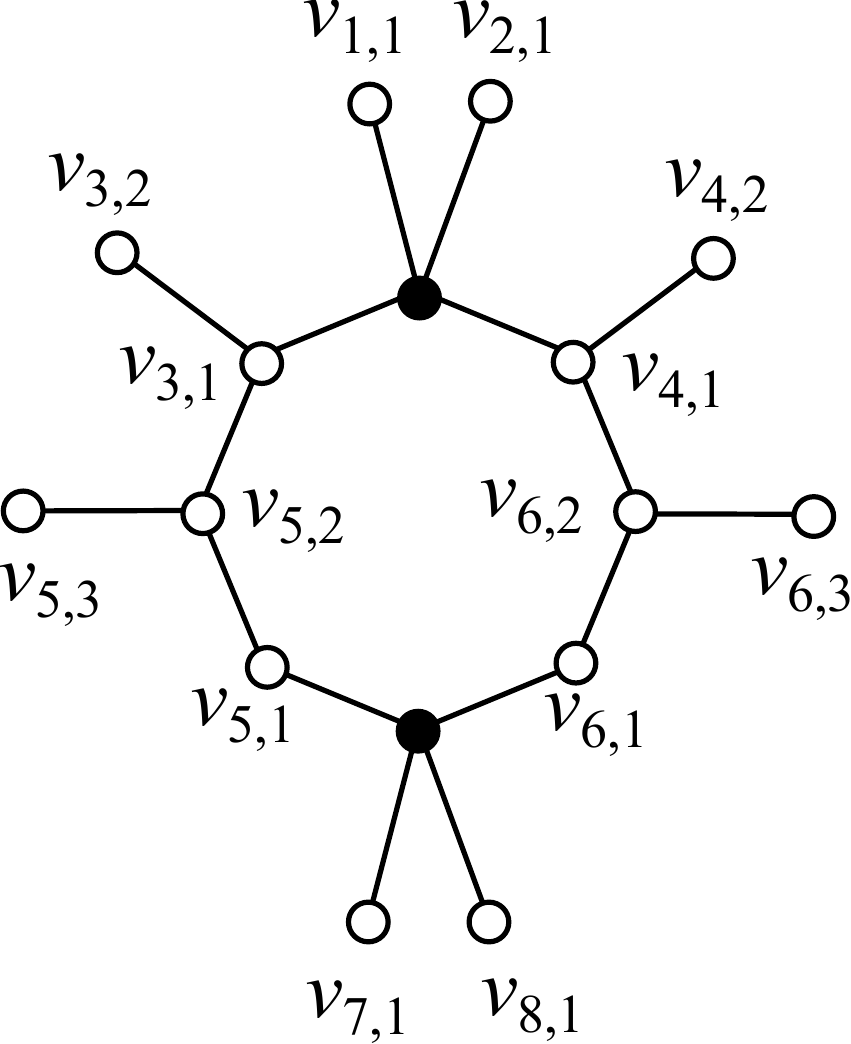} \hspace{50pt}
\includegraphics[scale=.5]{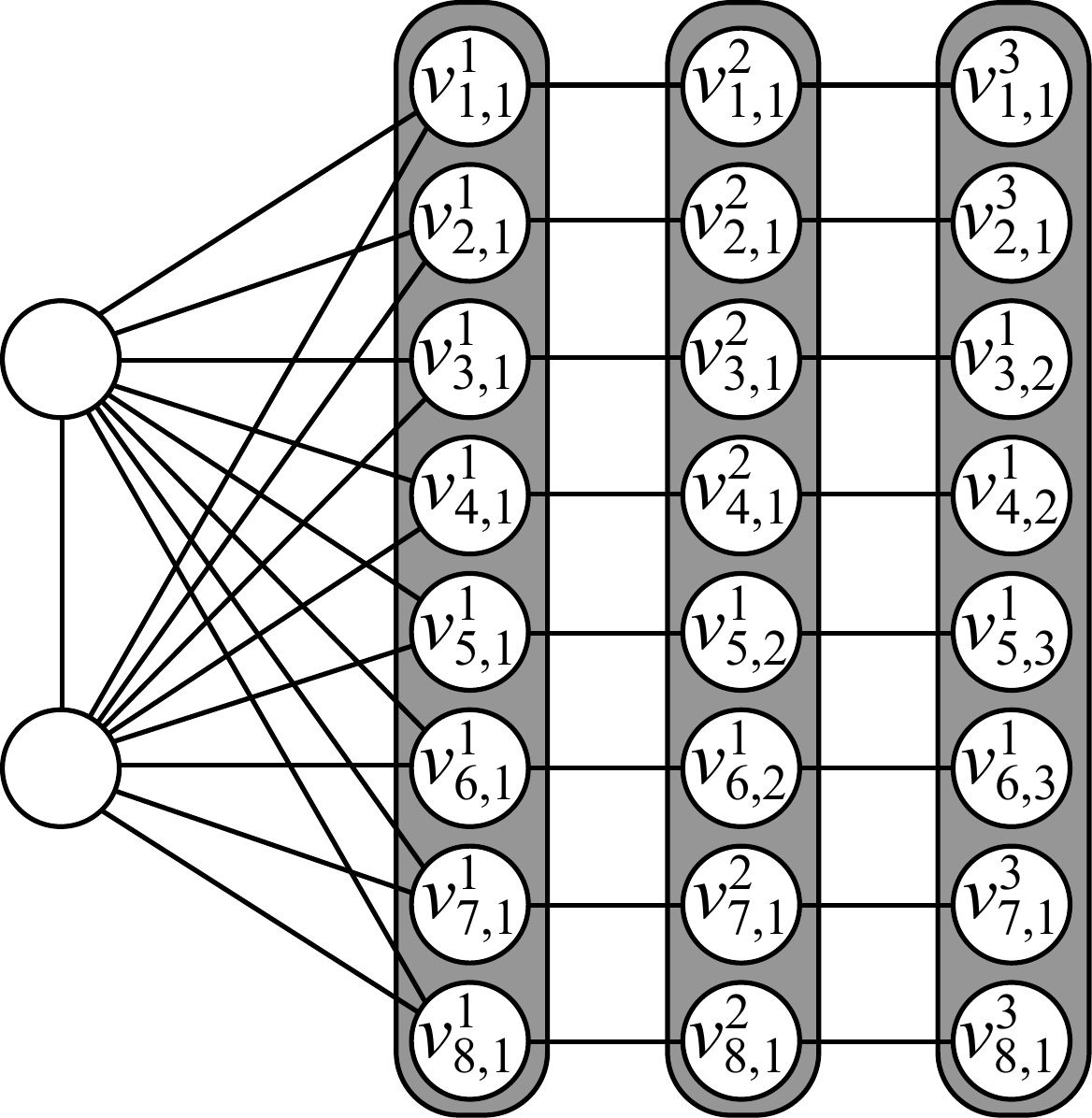} \hspace{50pt}\\
\vspace{20pt}
\includegraphics[scale=.5]{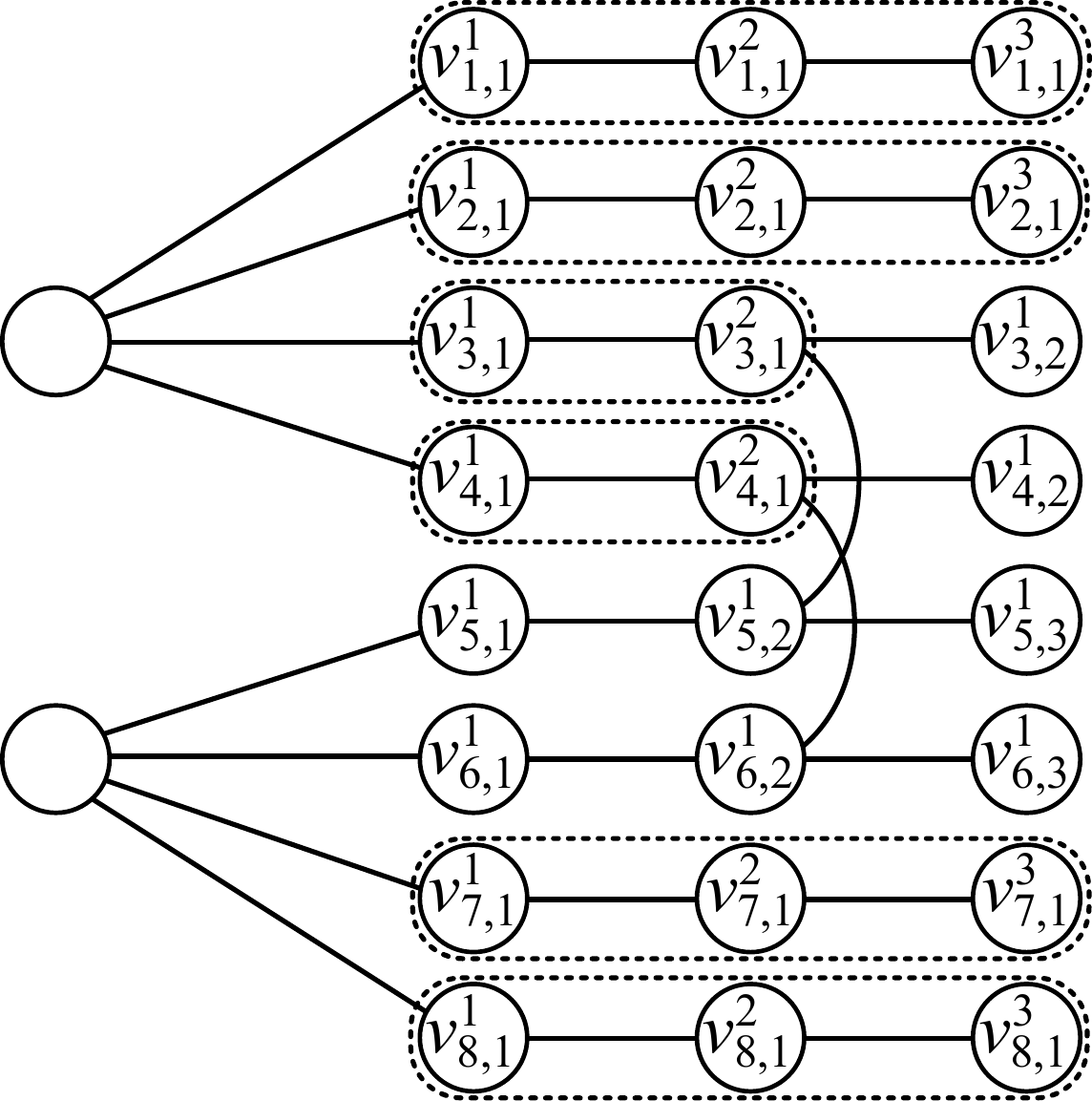} \hspace{50pt}
\includegraphics[scale=.5]{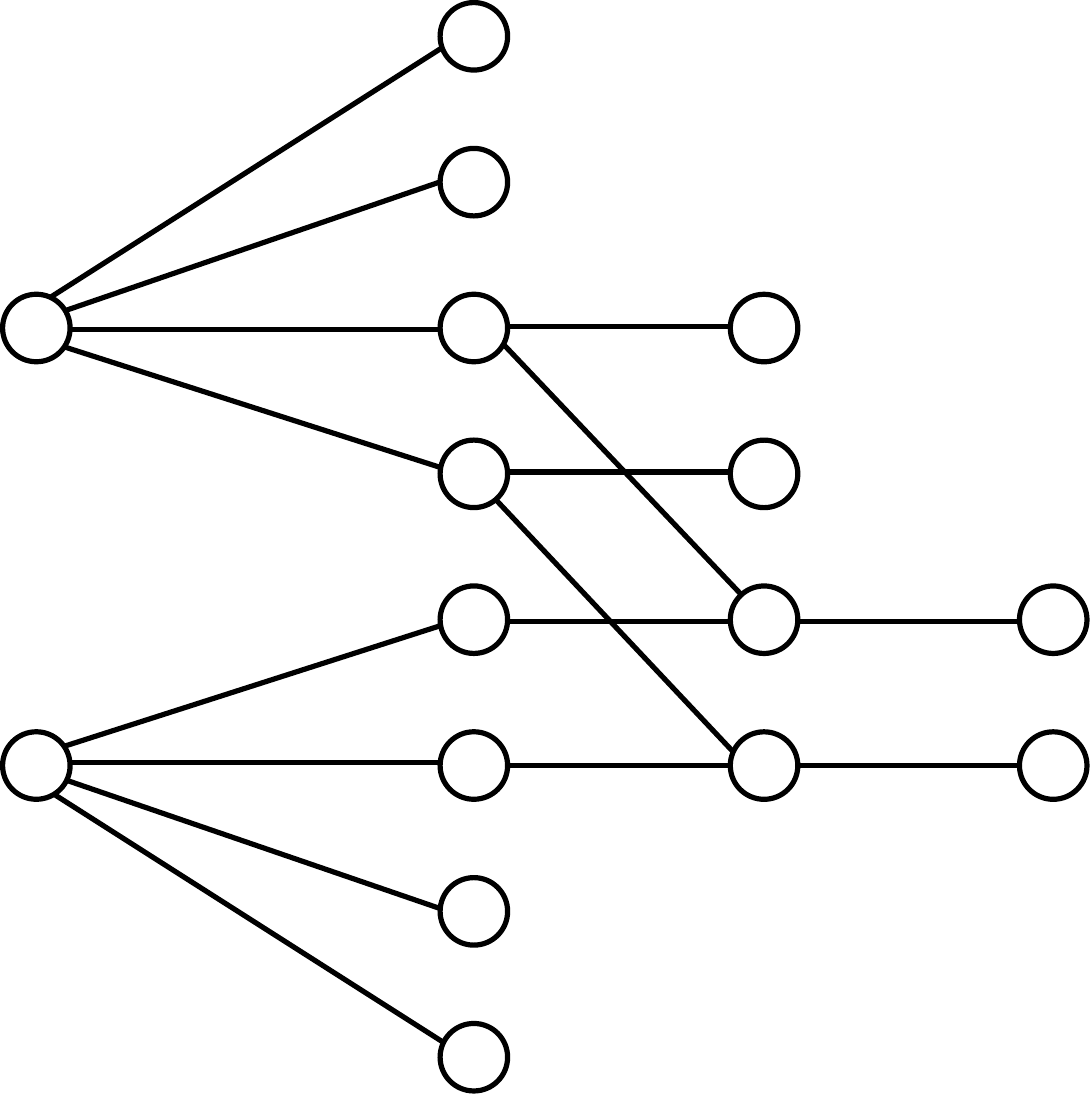}
\caption{\emph{Top left:} A graph $G$; the shaded vertices are a power throttling set of $G$. \emph{Top right:} The graph $\pd(2,8,3)$ is constructed and its vertices are labeled; shaded ovals represent complete edges.  \emph{Bottom left:} The necessary cross edges and complete edges are deleted, and the path edges to be contracted are shown in dashed ovals. \emph{Bottom right:} After the path edges are contracted, the original graph $G$ is obtained.}
\label{PDextExample}
\end{center}
\end{figure}

Conversely, suppose there exist integers $a\geq 0$, $b\geq 0$, and $s\geq 1$ such that $s + b = t$, and $G$ can be obtained from $\pd(s,a,b)$ by contracting path edges, deleting complete edges, and deleting cross edges so that the remaining cross edges saturate the $a$-vertices. Let $S$ be the set of $s$-vertices in $\pd(s,a,b)$ and $A$ be the set of $a$-vertices. Clearly $S$ is a power dominating set of $\pd(s,a,b)$, and $\ppt(\pd(s,a,b);S)=b$.

In the power domination process of $\pd(s,a,b)$ using $S$, complete edges are not used in the domination step and are not used in any forcing step, since any vertex which is adjacent to a non-colored vertex via a complete edge is also adjacent to a non-colored vertex via a path edge. Therefore, $S$ remains a power dominating set after any number of complete edges are deleted from $\pd(s,a,b)$; moreover, deleting complete edges from $\pd(s,a,b)$ cannot increase the power propagation time using $S$, since all the forces can occur in the same order as in the original graph, via the path edges.

It is also easy to see that if any path edges of $\pd(s,a,b)$ are contracted, $S$ remains a power dominating set of the resulting graph, since all the forces can occur in the same relative order along the new paths. 
Moreover, note that $\pd(s,a,b)-S\simeq K_a \square P_b$, and that $A$ is a zero forcing set of $K_a \square P_b$. Thus, the power domination process of $\pd(s,a,b)$ using $S$ after the domination step is identical to the zero forcing process of  $K_a \square P_b$ using $A$. It follows from Lemma 3.15 of \cite{carlson} that contracting path edges of $K_a \square P_b$ does not increase the zero forcing propagation time using $A$. Thus, contracting path edges of $\pd(s,a,b)$ does not increase the power propagation time using $S$.

Finally, deleting cross edges so that the remaining cross edges saturate the $a$-vertices ensures that every $a$-vertex will still be dominated by an $s$-vertex in the first timestep. Thus, since $S$ remains a power dominating set of $G$, and since $G$ is obtained from $\pd(s,a,b)$ by operations that do not increase the power propagation time using $S$, it follows that $\zpth(G) \leq |S|+\ppt(G;S)\leq |S|+\ppt(\pd(s,a,b);S)=s+b=t$.

To see that it can be verified in polynomial time whether a graph $G=(V,E)$ satisfies the conditions of the theorem, note that for a fixed constant $t$, there are $O(n^t)$ subsets of $V$ of size at most $t$. Given a set $S\subset V$, it can be verified in $O(n^2)$ time whether $S$ is a power dominating set of $G$, and if so, $\ppt(G;S)$ can be computed in $O(n^2)$ time. Thus, it can be verified in $O(n^{t+2})$ time whether there exists a power dominating set $S$ with $|S|\leq t-\ppt(G;S)$, and hence whether $\zpth(G)\leq t$.
\qed
\vspace{9pt}

\noindent We can use Theorem \ref{pdChar} to quickly characterize graphs with low power domination throttling numbers. 
\begin{corollary}
Let $G$ be a graph. Then $\zpth(G) = 1$ if and only if $G \simeq K_1$.
\end{corollary}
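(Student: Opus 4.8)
The plan is to apply Theorem~\ref{pdChar} with $t = 1$ and characterize when the equality $\zpth(G) = 1$ can hold. Since $\zpth(G) \geq \gamma_P(G) + \ppt(G;S) \geq 1 + 1 = 2$ whenever $G$ has at least one vertex requiring a nonzero propagation time, the real content is that $\zpth(G) = 1$ forces both a power dominating set of size $1$ and a propagation time of $0$, which should only be possible for the single-vertex graph. First I would invoke Theorem~\ref{pdChar} directly: $\zpth(G) \leq 1$ if and only if there exist integers $a \geq 0$, $b \geq 0$, $s \geq 1$ with $s + b = 1$. The constraint $s \geq 1$ and $b \geq 0$ together with $s + b = 1$ forces $s = 1$ and $b = 0$.

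Next I would analyze the graph $\pd(s, a, b)$ under these forced parameter values. With $b = 0$, the definition of $\pd(s, a, b)$ stipulates (via the clause ``if either $a = 0$ or $b = 0$, then $\pd(s,a,b) \simeq K_s$'') that $\pd(1, a, 0) \simeq K_1$ regardless of $a$. The permitted operations in Theorem~\ref{pdChar} (contracting path edges, deleting complete edges, deleting cross edges) applied to $K_1$ can only yield $K_1$ again, since $K_1$ has no edges of any type to contract or delete. Therefore $\zpth(G) \leq 1$ if and only if $G \simeq K_1$. Combining with the trivial direction ($\zpth(K_1) = 1$, since a single vertex is a power dominating set of itself with propagation time $0$, giving $|S| + \ppt = 1 + 0 = 1$), I conclude $\zpth(G) = 1$ iff $G \simeq K_1$.

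The main obstacle — really the only point needing care — is confirming that the case $b = 0$ of the $\pd$ construction collapses to $K_1$ and that no other parameter choice satisfies the arithmetic. One must check that the degenerate clause in Definition~\ref{def_gsab} genuinely applies (it does, since $b = 0$ triggers it), and that $s = 1$ is truly forced rather than, say, permitting $s = 0$; but the definition requires $s \geq 1$, so the only solution to $s + b = 1$ in the allowed range is $(s, b) = (1, 0)$. A brief alternative, avoiding the heavy machinery of Theorem~\ref{pdChar}, would be the direct observation that $\zpth(G) = 1$ demands $|S| + \ppt(G;S) = 1$ with $|S| \geq 1$ and $\ppt(G;S) \geq 0$, forcing $|S| = 1$ and $\ppt(G;S) = 0$; but $\ppt(G;S) = 0$ means $S$ already colors all of $G$ before any timestep, which for a single-vertex set is only possible when $G$ has exactly one vertex. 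I would present the Theorem~\ref{pdChar} derivation as the primary argument since it showcases the utility of the characterization, and the whole proof should fit in a few lines.
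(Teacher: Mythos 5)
Your primary argument is exactly the paper's intended route: the corollary is presented as a direct consequence of Theorem~\ref{pdChar}, and your derivation is correct --- $s+b=1$ with $s\geq 1$, $b\geq 0$ forces $(s,b)=(1,0)$, the degenerate clause of Definition~\ref{def_gsab} gives $\pd(1,a,0)\simeq K_1$ for every $a$, and $K_1$ is fixed by all three operations, so $\zpth(G)\leq 1$ iff $G\simeq K_1$. The short elementary alternative you mention ($|S|\geq 1$ and $\ppt(G;S)=0$ force $S=V(G)$ with $|S|=1$) is also valid, but the Theorem~\ref{pdChar} argument you feature is the one the paper has in mind.
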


\begin{corollary}
Let $G$ be a graph. Then $\zpth(G) = 2$ if and only if $G \simeq \overline{K}_2$ or $G$ has a dominating vertex and $G\not\simeq K_1$.
\end{corollary}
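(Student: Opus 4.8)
The plan is to apply Theorem~\ref{pdChar} with $t=2$, so that $\zpth(G)=2$ holds if and only if $G$ arises from $\pd(s,a,b)$ with $s+b=2$ by the three listed operations. Since $s\geq 1$ and $b\geq 0$, there are exactly two cases to unpack: $(s,b)=(1,1)$ and $(s,b)=(2,0)$. I would treat these separately and show that the graphs they produce are precisely the two families in the statement (with the degenerate cases $a=0$ handled alongside). I would also remember to subtract the graphs already covered by $\zpth(G)=1$, namely $K_1$, since the characterization is for $\zpth(G)\le 2$ and I want the exact value $2$.

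First I would analyze $(s,b)=(2,0)$. By Definition~\ref{def_gsab}, if $b=0$ then $\pd(2,a,0)\simeq K_2$, so the only graph obtainable is $K_2$ with possibly its single complete edge deleted, giving $K_2$ or $\overline{K}_2$. Since $\zpth(K_2)=2$ can be realized with $s=1,b=1$ as well, the genuinely new graph here is $\overline{K}_2$, which has no dominating vertex and is therefore not captured by the other family. Next I would analyze $(s,b)=(1,1)$: here $\pd(1,a,1)$ consists of a single $s$-vertex $w$ joined to a copy of $K_a$ (since $K_a\square P_1\simeq K_a$, and all its vertices have minimum degree, so $w$ is adjacent to all of them). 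With $b=1$ there are no path edges to contract, so the allowed operations reduce to deleting complete edges and deleting cross edges subject to saturating the $a$-vertices. The saturation condition forces $w$ to remain adjacent to every one of the $a$ vertices in $K_a$, so $w$ is a dominating vertex of the resulting graph, while the complete edges among the $a$-vertices may be deleted arbitrarily. This shows that the graphs obtained are exactly those possessing a dominating vertex.

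I would then reconcile the two directions. For the forward direction, $\zpth(G)=2$ forces $G$ into one of the two cases above, yielding either $\overline{K}_2$ or a graph with a dominating vertex; excluding $K_1$ (whose throttling number is $1$) gives the claimed dichotomy. For the converse, $\overline{K}_2$ is realized by $\pd(1,0,1)$ or the $K_2$ construction after deleting the lone edge, giving $\zpth\le 2$, and since it is disconnected and nonempty one checks directly $\zpth(\overline{K}_2)=2$; any $G\not\simeq K_1$ with a dominating vertex $w$ is realized as $G=\pd(1,a,1)$ with the appropriate complete edges deleted, so $\zpth(G)\le 2$, while $\zpth(G)\ge 2$ since $\zpth(G)=1$ only for $K_1$ by the previous corollary.

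The main obstacle is bookkeeping rather than any deep idea: I must be careful with the degenerate convention $\pd(s,0,b)\simeq K_s$ and the case $a=0$, verify that the saturation requirement in operation~3 genuinely forces the single $s$-vertex to dominate when $b=1$, and confirm that no further graphs sneak in from the $(2,0)$ case beyond $K_2$ and $\overline{K}_2$. I expect the only subtlety is checking that these two families are disjoint except where intended (e.g.\ $K_2$ lies in both the dominating-vertex family and the $\overline{K}_2$-adjacent construction, but this causes no double-counting since the statement uses an inclusive ``or''), and that excluding $K_1$ is handled cleanly by appealing to the previous corollary.
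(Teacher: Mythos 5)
Your proposal is correct and matches the paper's approach exactly: the paper states this corollary as an immediate consequence of Theorem~\ref{pdChar} with $t=2$, and your case analysis of $(s,b)=(1,1)$ and $(s,b)=(2,0)$, together with the exclusion of $K_1$ via the previous corollary, is precisely the intended argument. One trivial slip: $\pd(1,0,1)\simeq K_1$ by the degenerate convention, so it does not realize $\overline{K}_2$; but your alternative realization (deleting the lone complete edge of $\pd(2,a,0)\simeq K_2$) is correct, so the proof stands.
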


\noindent We conclude this section by characterizing graphs whose power domination throttling numbers are large.

\begin{proposition}\label{pth=n}
Let $G$ be a graph. Then $\zpth(G)=n$ if and only if $G\simeq\overline{K}_n$
or $G\simeq K_2\dot\cup\overline{K}_{n-2}$.
\end{proposition}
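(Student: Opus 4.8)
The plan is to start from the observation that $\zpth(G)\le n$ for every graph $G$, since $V(G)$ is itself a power dominating set with $\ppt(G;V(G))=0$, giving $|V(G)|+\ppt(G;V(G))=n$. Consequently $\zpth(G)=n$ is equivalent to the assertion that \emph{every} power dominating set $S$ satisfies $|S|+\ppt(G;S)\ge n$; the whole argument thus reduces to deciding exactly when no power dominating set can beat the trivial choice $S=V(G)$. Throughout I will use the two elementary facts that $\ppt(G;S)=0$ precisely when $S=V(G)$, and $\ppt(G;S)=1$ precisely when $S\neq V(G)$ but $N[S]=V(G)$.

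For the ``if'' direction I would verify the two families directly. In $\overline{K}_n$ there are no edges, so no vertex is ever colored unless it lies in $S$; hence $V(G)$ is the only power dominating set and $\zpth(\overline{K}_n)=n$. In $K_2\dot\cup\overline{K}_{n-2}$, all $n-2$ isolated vertices must lie in $S$, and of the two endpoints $u,w$ of the edge at least one must lie in $S$ (otherwise they are never colored). A short check of the two remaining possibilities, both endpoints in $S$ (sum $n+0$) and exactly one endpoint in $S$ (sum $(n-1)+1$), shows that every power dominating set yields sum exactly $n$.

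For the ``only if'' direction I would argue the contrapositive: if $G$ is neither $\overline{K}_n$ nor $K_2\dot\cup\overline{K}_{n-2}$, I produce a power dominating set $S$ with $|S|+\ppt(G;S)\le n-1$. The key device is a pair of distinct vertices $\{v,w\}$ such that each of $v,w$ has a neighbor outside $\{v,w\}$; for such a ``good pair'', $S:=V(G)\setminus\{v,w\}$ satisfies $N[S]=V(G)$, so $S$ is a power dominating set with $\ppt(G;S)=1$ and $|S|+\ppt(G;S)=(n-2)+1=n-1$. To locate a good pair I would split on the number of vertices of degree at least $2$: if there are at least two, they form a good pair; if there is exactly one, say $c$, then $c$ has at least two neighbors, each necessarily of degree $1$, and two of those neighbors form a good pair since each sees $c$ outside the pair; and if every vertex has degree at most $1$, then $G$ is a disjoint union of edges and isolated vertices, and since $G$ is not one of the two exceptional graphs there are at least two edges, so one vertex chosen from each gives a good pair.

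The main obstacle, and the part requiring the most care, is this last direction: I must confirm that the degree case analysis is exhaustive and that ``no good pair exists'' pins $G$ down to exactly the two listed graphs. In particular I should record that every graph on $n\le 2$ vertices is already one of the exceptional graphs (so a non-exceptional $G$ automatically has $n\ge 3$, making the pair deletion legitimate), and verify in each branch that deleting the chosen pair really leaves $N[S]=V(G)$ rather than merely a power dominating set of larger propagation time.
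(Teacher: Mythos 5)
Your proposal is correct and takes essentially the same approach as the paper: both arguments reduce to constructing a power dominating set $S=V(G)\setminus\{u,v\}$ with $N[S]=V(G)$ (hence $|S|+\ppt(G;S)=n-1$) whenever $G$ is not one of the two exceptional graphs, which the paper does by choosing distinct endpoints of distinct edges when $|E(G)|\geq 2$. Your degree-based case analysis for locating the ``good pair'' is a minor variation of that edge-count argument, and is in fact somewhat more careful than the paper's terse phrasing, since it explicitly guarantees that each deleted vertex keeps a neighbor inside $S$.
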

\proof
If $G\simeq\overline{K}_n$ or $G\simeq K_2\dot\cup\overline{K}_{n-2}$, it is easy to see that $\zpth(G)=n$.
Let $G$ be a graph with $\zpth(G)=n$. If
$|E(G)|=0$, then $G\simeq\overline{K}_n$. If $|E(G)|=1$, then $G\simeq K_2\dot\cup\overline{K}_{n-2}$. If $|E(G)|\geq 2$, then let $u$ and $v$ be distinct endpoints of distinct edges of $G$. Let $S=V\setminus\{u,v\}$, so that
$|S|=n-2$ and  $\ppt(G;S)=1$. This implies that $\zpth(G)\le n-1$, a
contradiction.\qed

\begin{theorem}\label{pth=n-1}
Let $G$ be a graph. Then $\zpth(G)=n-1$ if and only if $G\simeq P_3\dot\cup\overline{K}_{n-3}$ or $G\simeq C_3\dot\cup\overline{K}_{n-3}$ or $G\simeq P_4\dot\cup\overline{K}_{n-4}$ or $G\simeq C_4\dot\cup\overline{K}_{n-4}$ or $G\simeq K_2\dot\cup K_2\dot\cup\overline{K}_{n-4}$.
\end{theorem}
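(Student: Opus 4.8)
The plan is to isolate the ``non-trivial part'' of $G$ obtained by deleting its isolated vertices, reduce the computation of $\zpth(G)$ to that part, and then bound its size. The key preliminary observation is a reduction lemma: for any graph $H$ and any $m\geq 0$, $\zpth(H\dot\cup\overline{K}_m)=\zpth(H)+m$. This holds because an isolated vertex can never be forced and so must lie in every power dominating set; hence (using the disjoint-union facts established in the proof of Corollary~\ref{cor_comp}) every power dominating set of $H\dot\cup\overline{K}_m$ has the form $S_H\dot\cup I$, where $I=V(\overline{K}_m)$ and $S_H$ is a power dominating set of $H$, and $\ppt(H\dot\cup\overline{K}_m;S_H\dot\cup I)=\max\{\ppt(H;S_H),0\}=\ppt(H;S_H)$. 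Minimizing $|S_H|+m+\ppt(H;S_H)$ over $S_H$ yields the claim.

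With the reduction lemma, the ``if'' direction is immediate. Each listed graph is $H\dot\cup\overline{K}_{n-n(H)}$ for some $H\in\{P_3,C_3,P_4,C_4,K_2\dot\cup K_2\}$. By Proposition~\ref{pth_path}, $\zpth(P_3)=\zpth(C_3)=2$ and $\zpth(P_4)=\zpth(C_4)=3$, and by the bounds of Proposition~\ref{pth_disjoint_union} (with $\gamma_P(K_2)=1$, $\ppt(K_2)=1$, $\zpth(K_2)=2$) one gets $\zpth(K_2\dot\cup K_2)=3$. In each case $\zpth(H)=n(H)-1$, so the reduction lemma gives $\zpth(G)=(n(H)-1)+(n-n(H))=n-1$.

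For the ``only if'' direction, suppose $\zpth(G)=n-1$ and write $G=H\dot\cup\overline{K}_m$, where $H$ is induced on the non-isolated vertices. Since $\zpth(\overline{K}_n)=n$ by Proposition~\ref{pth=n}, $G$ has an edge, so $H$ is nonempty and, having no isolated vertices, satisfies $n(H)\geq 2$; the reduction lemma gives $\zpth(H)=n(H)-1$. The crux is to prove $n(H)\leq 4$. I would first establish the auxiliary bound that if $H$ has a dominating set of size $n(H)-3$, then $\zpth(H)\leq n(H)-2$: taking such a dominating set $D$ as a power dominating set gives $N[D]=V(H)$ and $D\neq V(H)$, so $\ppt(H;D)=1$ and $\zpth(H)\leq|D|+1=n(H)-2$. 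Contrapositively, $\zpth(H)=n(H)-1$ forces $H$ to have no dominating set of size $n(H)-3$, i.e.\ its minimum dominating set has size at least $n(H)-2$. On the other hand, by the classical theorem of Ore, a graph with no isolated vertices has a dominating set of size at most $n(H)/2$. Combining, $n(H)-2\leq n(H)/2$, hence $n(H)\leq 4$.

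It then remains to enumerate all $H$ with no isolated vertices and $2\leq n(H)\leq 4$ having $\zpth(H)=n(H)-1$. For $n(H)=2$ the only such graph is $K_2$, with $\zpth(K_2)=2\neq 1$, which is excluded. For $n(H)=3$ the options are $P_3$ and $C_3$, both satisfying $\zpth=2=n(H)-1$. For $n(H)=4$ the options with no isolated vertices are $P_4$, $C_4$, $K_{1,3}$, the triangle-with-pendant, $K_4$ minus an edge, $K_4$, and $K_2\dot\cup K_2$; the last four connected ones each have a dominating vertex and so have $\zpth=2\neq 3$ by the corollary characterizing $\zpth=2$, and are excluded, while $P_4$, $C_4$, and $K_2\dot\cup K_2$ have $\zpth=3=n(H)-1$. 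This leaves exactly $H\in\{P_3,C_3,P_4,C_4,K_2\dot\cup K_2\}$, so $G=H\dot\cup\overline{K}_{n-n(H)}$ is one of the five listed graphs. The main obstacle is the bound $n(H)\leq 4$; once the problem is linked to ordinary domination via the ``delete three vertices'' upper bound and Ore's $n/2$ theorem, everything else reduces to a finite check.
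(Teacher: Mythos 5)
Your proof is correct, and it takes a genuinely different route from the paper's. The paper decomposes $G$ into connected components, uses Proposition~\ref{pth_disjoint_union} to force $\zpth(G_i)\geq n(G_i)-1$ on every component $G_i$, characterizes the connected graphs with $\zpth = n-1$ by a case analysis on maximum degree ($\Delta=1$ gives $K_2$, which is excluded; $\Delta=2$ gives paths and cycles, excluded for $n\geq 5$ via the explicit set $V\setminus\{v_1,v_3,v_4\}$; $\Delta\geq 3$ is excluded via $V\setminus N(v)$), and then argues separately which combinations of the admissible components $K_1,K_2,P_3,C_3,P_4,C_4$ can coexist. You instead strip off the isolated vertices with the reduction lemma $\zpth(H\dot\cup\overline{K}_m)=\zpth(H)+m$ and bound the isolate-free part $H$ by a domination argument: a dominating set of size $n(H)-3$ would power dominate $H$ in one step, giving $\zpth(H)\leq n(H)-2$, so $\zpth(H)=n(H)-1$ forces every dominating set of $H$ to have size at least $n(H)-2$; Ore's classical bound (every isolate-free graph has a dominating set of size at most half its order) then yields $n(H)\leq 4$, and what remains is a finite check of the seven isolate-free graphs on at most four vertices, handled by Proposition~\ref{pth_path}, Proposition~\ref{pth_disjoint_union}, and the corollary characterizing $\zpth(G)=2$. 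Your route is shorter and absorbs the multi-component casework automatically --- $K_2\dot\cup K_2$ simply shows up in the enumeration, whereas the paper needs a dedicated argument that exactly two $K_2$ components can occur and that nothing else can accompany a $P_3$, $C_3$, $P_4$, or $C_4$ --- at the price of invoking one external classical theorem; the paper's proof is self-contained within its own results, and its degree-based arguments carry structural information of independent use. One convention both treatments share and that you should keep explicit: your reduction lemma needs $\ppt(\overline{K}_m;V(\overline{K}_m))=0$, i.e., that a power dominating set consisting of all vertices has propagation time zero; this is exactly the convention on which the paper's Proposition~\ref{pth=n} (and hence your appeal to $\zpth(\overline{K}_n)=n$) already relies, so your argument sits consistently in the paper's framework.
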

\proof
If $G$ is any of the graphs in the statement of the theorem, then it is easy to see that $\zpth(G)=n-1$. Let $G$ be a graph with $\zpth(G)=n-1$ and suppose $G$ has connected components $G_1,\ldots,G_k$. By Proposition \ref{pth_disjoint_union}, $n(G)-1=\zpth(G)\leq \zpth(G_1)+\ldots+\zpth(G_k)$, so $\zpth(G_i)\geq n(G_i)-1$ for $1\leq i\leq k$.

Let $G_i$ be an arbitrary component of $G$. We will show that $\zpth(G_i)=n(G_i)-1$ if and only if $G_i\in \{P_3,C_3,P_4,C_4\}$.
If $G_i\in \{P_3,C_3,P_4,C_4\}$, then it is easy to see that $\zpth(G_i)=n(G_i)-1$. Now suppose $\zpth(G_i)=n(G_i)-1$. Since $G_i$ is connected and $G_i\not\simeq K_1$, $\Delta(G_i)\ge 1$. If $\Delta(G_i)=1$, then connectedness implies that $G_i\simeq K_2$, but then $\zpth(G_i)=2=n(G_i)$, a contradiction. If $\Delta(G_i)=2$, then connectedness implies that $n(G_i)\ge3$ and $G_i\simeq P_{n(G_i)}$ or $G_i\simeq C_{n(G_i)}$. However, if $n(G_i)\ge 5$, and if we label the vertices of $G_i$ $v_1,\ldots,v_5,\ldots, v_{n(G_i)}$ in order along the path or cycle, then taking $S=V(G_i)\setminus\{v_1,v_3,v_4\}$ yields $\zpth(G_i)\leq |S|+\ppt(G_i;S)=n(G_i)-3+1$, a contradiction. Finally, if $\Delta(G_i)\ge3$ and $v$ is a vertex with $d(v)=\Delta(G_i)$, then taking $S=V(G_i)\setminus N(v)$ yields $\zpth(G_i)\leq|S|+\ppt(G_i;S)\le n(G_i)-2$, a contradiction. Moreover, by Proposition \ref{pth=n}, $\zpth(G_i)=n(G_i)$ if and only if $G_i\in \{K_1, K_2\}$. Thus, each component of $G$ is one of the following: $K_1, K_2, P_3, C_3, P_4, C_4$. 

If one of the components of $G$, say $G_1$, is $P_3$, $C_3$, $P_4$, or $C_4$, then all other components of $G$ must be $K_1$. To see why, let $v$ be a degree 2 vertex in $G_1$, and let $w$ be a non-isolate vertex in another component; then, taking $S=V(G)\setminus(N(v)\cup\{w\})$ yields $\zpth(G)\leq |S|+\ppt(G;S)=n(G)-3+1$, a contradiction. If one of the components of $G$, say $G_1$, is $K_2$, then exactly one other component must be $K_2$, and all other components must be $K_1$. To see why, note that by the argument above, no other component can be $P_3$, $C_3$, $P_4$, or $C_4$, and by Proposition \ref{pth=n}, there must be a component different from $K_1$. Thus, this component must also be a $K_2$ component. If there are at least three $K_2$ components, then let $v_1,v_2,v_3$ be  degree 1 vertices, each belonging to a distinct $K_2$ component; taking $S=V(G)\setminus\{v_1,v_2,v_3\}$ yields $\zpth(G)\leq |S|+\ppt(G;S)=n(G)-3+1$, a contradiction. Thus, there are exactly two $K_2$ components. 
\qed

\section{Conclusion}
In this paper, we presented complexity results, tight bounds, and extremal characterizations for the power domination throttling number. Our complexity results apply not only to power domination throttling, but also to a general class of minimization problems defined as the sum of two graph parameters. One direction for future work is to determine the largest value of $\zpth(G)$ for a connected graph $G$. For example, $\zpth(G)\geq \gamma_P(G)$, and there are graphs for which $\gamma_P(G)=\frac{n}{3}$. Is there an infinite family of connected graphs for which $\zpth(G)=\frac{n}{2}$? It would also be interesting to find operations which affect the power domination throttling number monotonely, or conditions which guarantee that the power domination throttling number of a graph is no less than or no greater than the power domination throttling number of an induced subgraph. We partially answered this question by showing that power domination throttling is subtree monotone for trees. Finding an exact polynomial time algorithm for the power domination throttling number of trees would also be of interest.

\section*{Acknowledgements}
This work is supported by grants CMMI-1634550 and DMS-1720225.

\bibliographystyle{abbrv}

\end{document}